\newtheorem{theorem}{Theorem}[section]
\newtheorem{proposition}[theorem]{Proposition}
\title{Information Asymmetry in Queues with Strategic Customers}
\author[1]{Shunan Zheng\thanks{*Corresponding author. Email: sn.zheng@utexas.edu}}
\author[1]{John Hasenbein}
\affil[1]{\small Operations Research and Industrial Engineering, The University of Texas at Austin, 204 E. Dean Keeton St., Austin, TX 78712, United States}
\begin{document}
\maketitle

\begin{abstract}
This paper studies information asymmetry in an unobservable single-server queueing system. While system managers have knowledge of the true arrival rate, customers may lack this information and instead form arbitrary beliefs. We propose a three-tier hierarchy of information asymmetry with increasing levels of information disclosure: customers keep private beliefs, customers are aware of the beliefs of others, and customers know the true arrival rate. Within this framework, the effects of the belief distribution, which is assumed to be general with minimal restrictions, are analyzed in terms of equilibrium joining probabilities, revenue, and social welfare. Furthermore, strategies for information disclosure are proposed for system managers to regulate the queue.

\end{abstract}

\section{Background}

\subsection{Introduction}

In the classical strategic queueing literature, a common assumption is that customers have full knowledge of the system \cite{naor_regulation_1969,edelson_congestion_1975}. In reality, this is often not the case, in that customers usually have more limited access to information than managers, and their information levels are highly subjective to what the managers choose to reveal. This information asymmetry creates uncertainty for customers, who must factor incomplete information into their decision-making. 

In recent years, there has been a growing body of research dedicated to exploring information asymmetry and parameter uncertainty in queueing models. While the former stems from the latter, it is useful to study the effects of limited customer awareness separately. In this paper, we focus primarly on information asymmetry. In addition, we focus specifically on the asymmetry of the arrival rate, although our approach can be extended to other system parameters, such as service rate, payoff, and waiting cost. 

A direct consequence of information asymmetry is that customers must develop their own beliefs about the arrival rate to evaluate their waiting time. Diverse past experiences lead to heterogeneous beliefs about the arrival rate, which collectively form the customer belief distribution. We discuss this belief more thoroughly in Section \ref{Chap: Belief}. Customers who believe that the arrival rate is low are optimistic, expecting a less congested system and a better experience, while those who believe the arrival rate is higher are pessimistic, anticipating more congestion and a worse experience. Our paper differs from existing literature in that we study a general form of belief distribution, which makes our conclusions more adaptable to various scenarios.

Some third-party platforms have recognized the importance of reducing information gaps between system managers and customers. Typically, they provide two types of data: historical estimates of the expected congestion and live updates on the current system state. The former provides less precise information than the latter, but it is often better than having no information. Inspired by this setting, we categorize the information availability into three levels, which are discussed in Section \ref{Chap: Info Levels} and serve as a powerful tool to approach this problem. 
In this paper, uncertainty regarding system congestion is manifested through the unknown arrival rate. 

Based on this framework, we explore how the distribution of customer beliefs and the level of information revealed by managers impact the system in terms of the equilibrium joining probability, revenue, and social welfare. We also provide managerial insight for making joint decisions on both the fee to join the system and the disclosure of information to customers.

\subsection{Related Literature}

 Naor's paper \cite{naor_regulation_1969} is the first to study the strategic customer behavior in queues, which introduced the concept of customers as active decision-makers who choose whether to join a queue based on observable queue lengths. Following Naor's paper, the unobservable queue was first studied by Edelson and Hildebrand \cite{edelson_congestion_1975}. Since the publication of these foundational papers, there has developed a large body of literature on related models.
At the time of its publication, Hassin's book reviewed essentially every related paper \cite{hassin_rational_2016}. 

More recent papers investigate the idea of parameter uncertainty in strategic queues. Hassin and Snitkovsky \cite{hassin_social_2020} provide
conditions under which "Naor's Inequality" holds, when there is
uncertainty in the arrival rate to the system. In a closely related paper, Chen and Hasenbein \cite{chen_knowledge_2020} show that under arrival rate uncertainty, the decisions of the social optimizer and revenue maximizer no longer coincide. Hassin et al.\cite{hassin_strategic_2023} investigate the same type of system under different assumptions on customer rationality. When the expected service time and value of service are unknown but positively correlated, Debo and Veeraraghavan \cite{debo_equilibrium_2014} show that the equilibrium structure involves non-monotone queue joining. 
\cite{cohen_learning-based_2024} propose a learning-based dispatching algorithm for an M/M/1 queuing system with unknown service and arrival rates. 

Some papers focus on designing queueing mechanisms for queue regulation. \cite{economou2021impact} discuss several techniques for the control of information in a given system and their impact. A hybrid information structure between observable and unobservable periods, which potentially increases equilibrium throughput and social welfare, is studied in \cite{economou2021impact, dimitrakopoulos_strategic_2021}. Delay or queue-related information shared with customers influences their strategic behavior and overall system performance as noted in\cite{burnetas_strategic_2017,ibrahim2017does}. Haviv and Oz \cite{haviv_self-regulation_2018} propose regulation schemes, under which the resulting equilibrium joining rate coincides with the social optimum. 

Other related work considers information disclosure as a regulation tool, studying how customers belief updates under information asymmetry. The work of Hassin \cite{hassin_information_2007} measures how uncertainty affects profits in an unobservable single-server queue, and whether the manager has an incentive to reveal the information under different pricing strategies.  Guo and Haviv \cite{guo_optimal_2022} explore the optimal disclosure of queue length information when service quality is uncertain. Ravner and Sakuma \cite{ravner_strategic_2021} study the strategic choice of arrival time with service time uncertainty, considering different levels of customer rationality and knowledge. While these papers assume a simple two-class belief structure, Cui and Veeraraghavan \cite{cui_blind_2016} study the impact of arbitrary beliefs about the service rate in an observable queue, demonstrating that information disclosure does not always improve social welfare and may even reduce consumer welfare. 

In this paper, we assume deterministic system parameters. When the system parameter is revealed to the customers, our framework matches \cite{edelson_congestion_1975}, which we refer to as the classical case. However, if this information is not disclosed, the customers incorporate uncertainty into their decision-making due to not knowing its true value. Our work follows the assumption of a general belief distribution as in \cite{chen_knowledge_2020} and \cite{cui_blind_2016} under the unobservable case and considers the impact of information disclosure.
 
The rest of the paper is structured as follows. Section \ref{Sec:Model} introduces our model setup and key assumptions regarding customers' beliefs and the information state hierarchy. For the reader’s convenience, we summarize our key results in Section \ref{Chap: notations}. Section \ref{Sec: derivations} derives the equilibrium, revenue, and social welfare of each information state. Section \ref{Sec: Impact} analyzes the impact of information asymmetry from three perspectives, covering the equilibrium in Section \ref{Chap: ImpactEq}, revenue in Section \ref{Chap: ImpactRev}, and the social welfare in Section \ref{Chap: ImpactSW}. In particular, we compare the values under fixed price or joining rate, and also the maximum achievable values by optimizing over price. Our focus is on the relationships between these factors rather than on their absolute values. The conclusions in Section \ref{Sec: Impact} provide insights into the effects of information disclosure in Section \ref{Sec: Insights}. Finally, in Section \ref{Sec: Numerics}, we illustrate our results with numerical examples.

\section{Model} \label{Sec:Model}

We consider a single-server M/G/1 queueing system with Poisson arrivals at rate $\lambda$. Suppose that the service time \(S\) of each customer follows a general distribution that satisfies
\[
\mathbb{E}[S] = \frac{1}{\mu}, \quad \mathbb{E}[S^2] = s^2.
\]
Define the traffic intensity as $\rho = \frac{\lambda}{\mu}, \text{with } \rho < 1$. Then, according to the Pollaczek-Khinchine formula, the expected waiting time in the system is:
\[
\mathbb{E}[W] = \frac{\lambda s^2}{2(1 - \rho)}+\frac{1}{\mu}.
\]
For the special case of an M/M/1 queue, the service time follows exponential distribution with rate $\mu$ and the expected waiting time in the system is \(\mathbb{E}[W] = \frac{1}{\mu-\lambda}\).

Customers receive a payoff \(R\) upon completing the service and incur a waiting cost \(C\) per unit of waiting time. The manager charges a service fee \(p\) to each customer. The customer chooses a strategy to join the queue with probability \(q\). Denote the equilibrium expected waiting time by \(W(\lambda_e)\), where \(\lambda_e\) is the effective arrival rate. A customer’s expected utility is given by \(R - p - CW(\lambda_e)\) if they join the queue and zero if they balk. Throughout this paper, we assume that \(R\geq \frac{C}{\mu} \). Otherwise, no customer would ever join the queue, since the customer’s utility would be \(R-p-C\,W(\lambda_e)\leq R-\frac{C}{\mu}<0\). We study the unobservable setting, in which customers cannot observe the number of customers in the system before deciding to join. We also assume that customers may not renege.

Customers do not know the arrival rate \(\lambda\). Rather, each customer forms a belief about the arrival rate. In general, we allow these beliefs to be different for each customer, and we thus denote these as heterogeneous beliefs. However, the customers do know the service rate and the other system parameters. Based on this knowledge and the arrival rate belief, arriving customers calculates their expected waiting time based on their belief, joining only if their expected net benefit is non-negative. 

In contrast, we assume that the manager has full knowledge of system parameters, including the arrival rate. The manager also knows the distribution of customer arrival rate beliefs, but she does not know the belief of any particular customer who arrives. We consider two types of managers: (1) the revenue maximizer (RM), who seeks to maximize total fees collected from customers, and (2) the social optimizer (SO), who aims to maximize total utility. As usual, the service fee $p$ is considered a transfer payment in the case of a social optimizer. 

\subsection{The Unobservable Queueing Game}

To identify the information flow and to avoid ambiguity in our problem formulation, we specify the order the queueing game is played:

\begin{itemize}
    \item[] \textbf{(Nature)} At the beginning of each day, ``nature'' selects a certain arrival rate \(\lambda\) for the system, which may vary from day to day but remains constant within a single day. 
    \item[] \textbf{(Manager)} The manager is able to observe this true arrival rate \(\lambda\). Knowing the arrival rate belief of the customers and all the system parameters, the manager then sets the entrance fee for the day. All these actions are taken in a negligible amount of time, before the arrival of any customer. The manager is not allowed to change the service fee once it is set.
    \item[] \textbf{(Customers)}  Each arriving customer chooses a joining probability \(q\) that induces a non-negative net benefit, based on their belief regarding the arrival rate. Then, they flip a coin with a winning probability of \(q\) to decide whether or not to join the queue. They cannot renege once they join the queue.
\end{itemize}

The motivation for allowing nature to select an arrival rate each day rather than fix on a certain arrival rate is to justify the formation of the heterogeneous belief. Otherwise, the customers will gradually learn the true arrival rate over time. 

\subsection{The Arrival Rate Belief} \label{Chap: Belief}

Customers form their beliefs about the arrival rate based on past experiences, word of mouth, media, and other information sources. However, these sources can only partially depict the system state, not to mention the uncertainty in future system parameters, which even the manager cannot fully anticipate.

These individual beliefs collectively form an arrival rate belief, which we denote by the non-degenerate random variable \(\Lambda\). The arrival rate belief \(\Lambda\) follows a general non-negative distribution. If the distribution is continuous, then we denote the density by \( f_{\Lambda}(\lambda) \). To clearly distinguish the customers' arrival rate belief \(\Lambda\) from the arrival rate \(\lambda\) determined by nature, we will refer to \(\lambda\) as the true arrival rate from now on. Note that our model does not exactly represent parameter uncertainty, even though \(\Lambda\) represents arrival rate uncertainty in some other related work. Note that the deterministic \(\lambda\) here indicates that the uncertainty in our model is not brought about by the system parameter itself, unlike in  some other related work.
Rather, it is brought about by \(\Lambda\), which represents the spectrum of customers' (individually degenerate) beliefs.

Denote the support of \(\Lambda\) by \([\lambda_{min},\lambda_{max}]\), which is a subset of \([0, \bar{\lambda}]\) and satisfies \(\lambda_{min}<\lambda<\lambda_{max}\), where \(\lambda\) is the true arrival rate. In particular, \(\bar{\lambda}\) represents the maximum possible arrival rate belief. The boundedness condition is primarily required to justify interchange operations in the proofs below. We let $\bar{\lambda} = \mu$ in this paper without loss of generality. If $\bar{\lambda}$ is greater than $\mu$, then the system manager (RM or SO) will set the fee $p$ to guarantee that the effective arrival rate does not exceed the server's processing rate. We do not make any assumptions about belief distribution of \(\Lambda\) beyond the finite support and that the range of \(\Lambda\) should cover \(\lambda\). Thus, the conclusions in our paper are general and can apply to any form of discrete or continuous distribution. 

We assume that there will be no return customers that arrive to the system more than once per day, so that no customers will come back with an updated belief. Still, it is possible that fully rational customers could infer the true arrival rate upon observing $p$. However, such an inference requires complex calculations that may be impractical for customers to perform at the moment of arrival, unlike the straightforward computation of their own benefit. Hence, we assume customers have limitations that prevent them from making fully rational decisions that update their beliefs.

\subsection{Three Levels of Information Asymmetry} \label{Chap: Info Levels}

Our main objective is to study the effect of information asymmetry in strategic queueing systems. The asymmetry, as alluded to earlier, is between the manager, who has full system knowledge, and the customers, who do not know the actual arrival rate. If all customers do know the true arrival rate, then this scenario is equivalent to the classical case studied by \cite{edelson_congestion_1975}. In this paper, we introduce two additional asymmetry scenarios, inducing a hierarchy of three cases with decreasing levels of customer information. We elaborate on each case in detail below, starting with the classical case and progressively reducing the information available to customers as they formulate their strategies.

\begin{itemize}

    \item[] \textbf{Classical Case:} Customers know the true arrival rate and are aware that all other customers also know the true arrival rate. However, they do not observe the queue length upon arrival, a condition that also applies to the first and second levels. 

    \item[] \textbf{Shared Belief Case:} Customers are aware of the belief distribution over the arrival rate, i.e., the distribution of $\Lambda$, which represents the joint belief distribution. They obtain this information either through an announcement by the manager or via a belief-sharing mechanism.
    
    \item[] \textbf{Private Belief Case:} Customers do not know the true arrival rate and instead form heterogeneous beliefs about it. They neither share their beliefs with other customers nor receive this information from the manager. Moreover, customers are unaware of their position within the belief distribution and thus assume that others share the same belief. 

\end{itemize}

We focus on comparing three levels of information asymmetry regarding the arrival rate. The information hierarchy allows us to examine the impact of information step by step, breaking down the actions we can take to improve either revenue or social welfare.

\subsection{Notation and Main Contributions} \label{Chap: notations}

Detailed results and proofs are discussed in later sections. Here, we summarize our main results, and the notation used in the remainder of the paper. One object of study is the 
customers' joining probability. In the classical and shared belief cases, there is one joining probability for all customers. In the private belief case, this probability depends on each individual customer's belief regarding the arrival rate, thus varies between individuals. For quantities that depend on the customer's private belief, we
use $\tilde{\lambda}$ to denote the dependence on this belief. 
We use $q$, with various superscripts and subscripts, to denote this quantity. The net benefit rate to the customers is denoted by $\text{U}$. In addition, we study system-wide metrics such as the revenue rate and social welfare rate. These are denoted by Rev and SW, respectively. Table \ref{Notations} summarizes the specific notation used in the various cases. 

\begin{table}[h]
    \centering
    \caption{Summary of Notation. We will give formal definitions of these terms in Section \ref{Sec: derivations}.}
    \begin{tabular}{l|c|c c c}
        \midrule
        & Manager  & Classical & Shared Belief & Private Belief \\
        \midrule
        \multirow{3}{*}{Equilibrium Joining Probability} & Individual & $q^C_e$ & $q^S_e$& $q_e^P(\tilde{\lambda})$ \\
                                      & RM & $q^C_m$ & $q^S_m$ &$q_m^P(\tilde{\lambda})$ \\
                                      & SO  & $q^C_s$ & $q^S_s$ & $q_s^P(\tilde{\lambda})$ \\
        \midrule
        Customer Net Benefit Rate & SO & $\text{U}^C$ & $\text{U}^S$ & $\text{U}^P(\tilde{\lambda})$ \\
        \midrule
        Revenue Rate& RM & $\text{Rev}^C$ & $\text{Rev}^S$ & $\text{Rev}^P$ \\
        \midrule
        Social Welfare Rate& SO & $\text{SW}^C$ & $\text{SW}^S$ & $\text{SW}^P$ \\
        \bottomrule
    \addlinespace[0.3em]
    \end{tabular}

    \label{Notations}
\end{table}


The main contributions of this paper are as follows:

\begin{itemize} 
    \item[1.] The impact of customer belief on equilibrium yields the relationship that $q^S_m\leq q^C_s = q^C_m = q^S_s\leq q_e^C = q^S_e$, under the condition that the expected waiting time is unbiased (see Section \ref{Chap: ImpactEq}).
    \item[2.] The impact of customer belief on revenue is mixed, depending on both the distribution of \(\Lambda\) and the system parameters. However, we identify specific conditions that determine the threshold at which the relationship shifts (see Section \ref{Chap: ImpactRev}). For a more intuitive understanding (see Figure \ref{RevCompare}).
    \item[3.] Consider optimizing over price and information disclosure strategy. From the SO’s perspective, eliminating information asymmetry always benefits social welfare, except when customers are generally pessimistic, with some individuals’ pessimism exceeding a specified threshold (see Section \ref{Chap: ImpactRevOpt}). From the RM’s perspective, revealing the true arrival rate is always a better strategy when customers are pessimistic, while calculations are needed to determine whether to reveal or conceal the information in other situations (see Section \ref{Chap: ImpactSW}).
    \item[4.] We provide managerial insights to guide managers in deciding which information to disclose, depending on customers’ beliefs and system parameters (see Section \ref{Sec:Insights})

\end{itemize}

\section{Joining Probability, Revenue and Social Welfare} \label{Sec: derivations}

In this section, we analyze the behavior of customers, the revenue maximizer, and the social optimizer, in each of the belief cases. 

\subsection{Classical Case}

In the classical case, all customers know the true arrival rate \(\lambda\) and thus are homogeneous in their strategies. They calculate the expected waiting time to be \(W(q\lambda)\), where all customers choose a joining probability \(q\) at equilibrium. When the service fee \(p\) is given, the expected net benefit of each customer is given by:

\begin{align}
    \text{U}^C(q) = R - p - CW(q\lambda).\label{u^u}
\end{align}

For the RM the revenue rate is given by $pq\lambda$. At equilibrium, we get from (\ref{u^u}) that the service fee is \(p = R - CW(q\lambda)\). For the SO, $p$ is considered to be a transfer payment. Thus, the benefit each customer yields upon completing the service is the customer's net benefit plus the transfer payment, which is also \(p =  R - CW(q\lambda) = \left[ R - CW(q\lambda)-p\right]+p\). Therefore, the revenue rate equals the net social welfare rate in the classical case, which can be expressed as:
\begin{align}
  \text{Rev}^C(q) = \text{SW}^C(q)= q\lambda \left(R - CW(q\lambda)\right). \label{Rev^Uq}
\end{align}
We prove the concavity of this function in Appendix \ref{Sec:Property}.

The goal of the SO is to optimize the long-run social welfare and the goal of the RM is to maximize the long-run revenue. The goals of the two types of manager coincide in this case, and their problem is given by:
\begin{align*}
    \underset{q \in [0,1]}{\max} \text{Rev}^C(q) = \underset{q \in [0,1]}{\max} \text{SW}^C(q).
\end{align*}
We denote the optimal joining probability for the SO as 
$q^C_s$ and the optimal joining probability for the RM 
as $q^C_m$. Since they solve the same optimization problem, we immediately have $q^C_m = q^C_s$. Furthermore, as shown in 
Edelson et al.\ \cite{edelson_congestion_1975}, $q^C_e\geq q^C_m = q^C_s$, where $q^C_e$ is the ``individual'' joining probability when
there is no system manager.  We discuss how information asymmetry impacts this relationship in Section 4.

From an alternative viewpoint, this problem can be reframed as a pricing problem. First, we establish the relationship between \(p\) and \(q\).
At equilibrium, the customers will settle on a strategy such that their expected net benefit is zero. Since each \( p \) induces an equilibrium joining probability \( q \) that solves \( \text{U}^C(q) = 0 \), we can express this relationship implicitly as
\begin{align} \label{q^C}
    q^C(p) = \min\left\{\frac{\xi(p)}{\lambda},\,1\right\},
\end{align}

where
\[
\xi(p) := \left(\frac{C s^2}{2(R - p - C/\mu)} + \frac{1}{\mu}\right)^{-1}.
\]

For some of our conclusions in this paper, in order to obtain explicit expressions, we assume exponential service times. In this case, the queue becomes an M/M/1 system, and $\xi(p)$ takes the simplified form  
\[
\xi(p) := \mu - \frac{C}{R - p}.
\]

In the rest of the paper, we continue to treat \( \xi \) as a function of price, but sometimes for notational convenience, we often write \( \xi \) without explicitly indicating its dependence on \( p \) when the context is clear.

Intuitively, $\xi$ represents the threshold effective arrival rate, the effective arrival rate at which a customer's utility is exactly zero. If we sample a customer belief $\tilde{\lambda}$ from the distribution $\Lambda$, the customer will choose joining probability $\tilde{q}$, so that $\xi = \tilde{q}\tilde{\lambda}$, which yields \(R - p - CW(\xi)\). Given fixed $p$, $\xi$ is a fixed constant as well. When the belief distribution $\Lambda$ shifts to the right(left), the distribution of joining probability $Q(p)$ shifts left(right) accordingly. In other words, the threshold effective arrival rate $\xi$ reflects the level of congestion that the customers tend to maintain to maximize their own benefit.

Finally, we can rewrite the expressions in terms of \(p\):

\begin{align} \label{RevUp}
  \text{Rev}^C(p) = \text{SW}^C(p) = 
  \begin{cases}
      p\,\xi(p) &  \lambda \geq \xi(p), \\
      p\lambda &  \lambda < \xi(p).
  \end{cases}
\end{align}

\subsection{Shared Belief Case}

Recall that in the shared belief case, customers all have a common belief regarding the arrival rate distribution. Then, customers compute the net expected benefit over the distribution of the arrival rate belief, which is given by:
\begin{align}
    \text{U}^S(q) = \mathbb{E}\left[ R-p-CW(q\Lambda)\right]. \label{u^s}
\end{align}

For the RM, the expected revenue rate is computed over the arrival rate belief distribution, given by:
\begin{align}
    \text{Rev}^S(q) = q\lambda \cdot \mathbb{E}\left[R-CW(q\Lambda)\right]\label{Rev^Sq}.
\end{align}
Note that the shared belief of the customers must be taken into account by the manager, as this belief distribution enters into their calculations once the entrance fee $p$ is posted. We prove the concavity of this function in Appendix \ref{Sec:Property}.

Nevertheless, the RM and SO no longer solve the same optimization problem in this scenario. The SO does not change to a different strategy from the classical case, because the SO cares about the actual net benefit (social welfare) obtained by the customer and optimizes the same problem as in the classical case based on the true arrival rate. Thus, we have:
\begin{align} \label{SWSq}
    \text{SW}^S(q) = \text{SW}^C(q)= q\lambda (R-CW(q\lambda)).
\end{align} 
However, the value of \(p\) required to make customers choose the same \(q\) may vary depending on the belief distribution, so \(\text{SW}^S(p)\) does not equal to \(\text{SW}^C(p)\). We prove the concavity of this function in Appendix \ref{Sec:Property}.

We denote the desired joining probability of the customers, the RM and the SO in this case by $q^S_e$, $q^S_m$ and $q^S_s$.

Similar to the classical case, let $q^S(p)$ denotes the equilibrium joining probability given the service fee \(p\). 
Unfortunately, this probability cannot generally be written in
a simple closed form. Denoting the solution to \(\text{U}^S(q)\) for a given \(p\) by \(\tilde{q}^S(p)\), we have

\begin{align} \label{q^S}
q^S(p) = \min\{\tilde{q}^S(p),1\}=  \begin{cases}
\tilde{q}^S(p) & \xi(p) \leq \bar{\lambda}, \\
1 & \xi(p) > \bar{\lambda}.
\end{cases}
\end{align}

The threshold point \(\bar{\lambda} = \mu - \frac{1}{\mathbb{E}[W(\Lambda)]}\) is given by the solution to \(\text{U}^S(1) = R-p-C\mathbb{E}[W(\Lambda)]\)= 0. This threshold coincides with the threshold \(\lambda\) in \eqref{q^C} if and only if \(\mathbb{E}[W(\Lambda)] = W(\lambda)\); that is, when all customers choose to join, the expected waiting time under the belief distribution matches the expected waiting time under the true arrival rate.

We can rewrite the revenue in terms of \(p\):
\begin{align}\label{RevSp}
  \text{Rev}^S(p) = p\lambda q^S(p)=
  \begin{cases}
p\lambda \tilde{q}^S(p) & \xi(p) \leq \bar{\lambda}, \\
p\lambda  & \xi(p) > \bar{\lambda}.
\end{cases}.
\end{align}
And the social welfare rate is given by

\begin{align} \label{SWSp}
\text{SW}^S(p) = 
\begin{cases}
\lambda \tilde{q}^S(p) \left[ R - CW(\tilde{q}^S(p)\lambda)\right] & \xi(p) \leq \bar{\lambda}, \\
\lambda\left[R-CW(\lambda)\right] & \xi(p) > \bar{\lambda}.
\end{cases}
\end{align}

\subsection{Private Belief Case}

For any customer with belief $\tilde{\lambda}$, the corresponding joining probability of the customer given \(p\) is \(\tilde{q} = \min\{\frac{\xi(p)}{\tilde{\lambda}},1\}\). In the private belief case, each customer derives their own joining strategy. We denote the joining probability of the individual with arrival rate belief \(\tilde{\lambda}\) under the desired system of the customer, the RM and the SO by $q^P_e(\tilde{\lambda})$, $q^P_m(\tilde{\lambda})$ and $q^P_s(\tilde{\lambda})$. 
 To characterize the overall probability of joining given price \(p\), we define the random variable:

\begin{align}\label{Qp}
Q(p): = \min\left\{ \frac{\xi(p)}{\Lambda}, 1\right\}.
\end{align}

Given a fixed \(p\), we take the expectation over the belief distribution \(\Lambda\) to get the revenue and social welfare in the private belief case:
\begin{align}\label{RevPp}
 \text{Rev}^P(p) = p\lambda \mathbb{E}\left[Q(p)\right] 
\end{align}
and
\begin{align} \label{SWPp}
\text{SW}^P(p) = \mathbb{E}\left[\lambda Q(p)\left(R-C\,W(\lambda Q(p))
\right)\right].   
\end{align}

\section{Impact of Information Asymmetry} \label{Sec: Impact}

In this section, we explore how the relationships among equilibrium, revenue, and social welfare differ across the three cases for the M/G/1 queue, from which we aim to gain insights into the manager's information disclosure and pricing strategies, as well as potential actions customers can take in response.

\subsection{Impact on Equilibrium} \label{Chap: ImpactEq}

In the private belief case, there is no uniform strategy for the joining probability. Therefore, in this section we focus on the comparison of equilibria for the unbiased and shared belief cases.

In the case of classical belief, it has been proved in \cite{edelson_congestion_1975} that $q_s^C = q_m^C \geq q_e^C$. In this section, we see how this conclusion changes in the shared belief case. We now present results on the relationships among the three equalibria, $q^S_e,q^S_m\text{ and } q^S_s$.

\begin{proposition} \label{prop:1}
    In an M/M/1 queue, $q^S_m\leq q^S_e$ for any $\Lambda$.
\end{proposition}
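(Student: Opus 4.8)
The plan is to exploit two monotonicity facts: the revenue maximizer never gains from a negative price, and any nonnegative price can only \emph{decrease} the joining probability relative to the unregulated ($p=0$) equilibrium. I would work with the map $p \mapsto q^S(p)$ from \eqref{q^S}. By definition $q^S_e = q^S(0)$ is the joining probability with no manager, while $q^S_m = q^S(p^S_m)$, where $p^S_m$ maximizes $\text{Rev}^S(p) = p\lambda q^S(p)$ in \eqref{RevSp}. So it suffices to establish (i) $q^S(p)$ is nonincreasing in $p$, and (ii) $p^S_m \ge 0$; combining these gives $q^S_m = q^S(p^S_m) \le q^S(0) = q^S_e$.

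For (i), I would use \eqref{u^s}: an interior equilibrium $\tilde{q}^S(p)$ solves $R - p - C\,\mathbb{E}[W(\tilde{q}^S(p)\Lambda)] = 0$, i.e. $\mathbb{E}[W(\tilde{q}^S(p)\Lambda)] = (R-p)/C$. Since $x \mapsto W(x) = \frac{1}{\mu-x}$ is strictly increasing on $[0,\mu)$ and $q \mapsto q\Lambda$ is pathwise nondecreasing, the map $q \mapsto \mathbb{E}[W(q\Lambda)]$ is continuous and strictly increasing on $[0,1]$. It follows that $\tilde{q}^S(p)$ is strictly decreasing in $p$ wherever it lies in $(0,1)$, and the truncation $q^S(p) = \min\{\tilde{q}^S(p),1\}$ preserves monotonicity; evaluating at $p=0$ recovers $q^S(0) = q^S_e$.

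For (ii): $\text{Rev}^S(0)=0$; for any $p<0$ we have $q^S(p) \ge q^S_e > 0$ (the strict positivity using $R > \frac{C}{\mu}$, so the unregulated equilibrium is nonzero), hence $\text{Rev}^S(p) = p\lambda q^S(p) < 0$; and for small $p>0$ one still has $q^S(p)>0$, so $\text{Rev}^S(p)>0$. Thus $\sup_p \text{Rev}^S(p) > 0$ is not attained at any $p<0$, giving $p^S_m \ge 0$ and the proposition. An equivalent ``$q$-view'' argument: by the monotonicity just used, $\mathbb{E}[R - CW(q\Lambda)] < 0$ for every $q > q^S_e$, so $\text{Rev}^S(q) = \lambda q\,\mathbb{E}[R - CW(q\Lambda)]$ from \eqref{Rev^Sq} is strictly negative there while being positive for small $q>0$, so its maximizer cannot exceed $q^S_e$. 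The concavity of $\text{Rev}^S$ proved in Appendix \ref{Sec:Property} is not needed for this conclusion, though it pins down uniqueness of $q^S_m$.

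The main obstacle is not conceptual but technical: transferring the continuity and strict monotonicity of $W$ through the expectation (and, if one prefers to argue via derivatives, commuting $\frac{d}{dq}$ with $\mathbb{E}[\cdot]$) is exactly what the finite-support hypothesis $\lambda_{max} \le \bar{\lambda} = \mu$ was introduced to license, so no further restriction on the shape of $\Lambda$ is required -- consistent with the claim holding for \emph{any} $\Lambda$. The remaining care is with boundary cases: if $\mathbb{E}[R - CW(\Lambda)] > 0$ then $q^S_e = 1$ and $q^S_m \le 1 = q^S_e$ is immediate; if $R = \frac{C}{\mu}$ the model degenerates with $q^S_m = q^S_e = 0$; otherwise $q^S_e$ is the unique root in $(0,1)$ of $q \mapsto \mathbb{E}[R - CW(q\Lambda)]$ and the argument above applies verbatim.
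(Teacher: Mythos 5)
Your argument is correct, but it takes a different route from the paper's. The paper works entirely in the $q$-domain and argues by contradiction from the \emph{strict concavity} of $\text{Rev}^S(\cdot)$: assuming $\tilde{q}^S_m > \tilde{q}^S_e$, it writes $\tilde{q}^S_e$ as a convex combination of $0$ and $\tilde{q}^S_m$ and derives a violation of concavity from $\text{Rev}^S(0) = \text{Rev}^S(\tilde{q}^S_e) = 0 < \text{Rev}^S(\tilde{q}^S_m)$ (with a separate case for $\tilde{q}^S_e = 0$, i.e.\ $R\mu = C$). Your primary route instead goes through the price domain: monotonicity of $p \mapsto q^S(p)$ plus the observation that the revenue maximizer never sets $p < 0$; the monotonicity step is essentially the same device the paper deploys in its proof of Proposition \ref{prop:2} for the SO, so you have in effect unified the two propositions under one mechanism. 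Your secondary ``$q$-view'' is the cleanest of all: since $\mathbb{E}[R - CW(q\Lambda)] < 0$ for every $q > \tilde{q}^S_e$ by strict monotonicity of $q \mapsto \mathbb{E}[W(q\Lambda)]$, the revenue $\lambda q\,\mathbb{E}[R - CW(q\Lambda)]$ is strictly negative beyond $\tilde{q}^S_e$ and nonnegative on $[0,\tilde{q}^S_e]$, so its maximizer cannot exceed $\tilde{q}^S_e$ --- no concavity needed. What the paper's approach buys is consistency with the machinery it has already built (the unique critical point of the concave $\text{Rev}^S$ is used again in Proposition \ref{prop:3}); what yours buys is a weaker hypothesis (only monotonicity of the expected waiting time, not concavity of revenue) and a cleaner treatment of the boundary cases $q^S_e = 1$ and $R = C/\mu$, both of which you handle explicitly and correctly.
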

\begin{proof}
Let $\tilde{q}^S_m$ be the unique critical point of $\text{Rev}^S(q)$, then $q^S_m = \min\{\tilde{q}^S_m, 1\}$ is the equilibrium joining probability that maximize the revenue. Let $\tilde{q}^S_e$ be the unique root of $\text{U}^S(q)$ when \(p=0\), then $q^S_e = \min\{\tilde{q}^S_e, 1\}$ is the individual equilibrium joining probability. 

    First, assume $\tilde{q}^S_e = 0$, thus we have $R\mu = C$. Arguing by contradiction, suppose also that $\tilde{q}^S_m > 0$. Then it follows that
    \begin{align*}
       (\text{Rev}^S)'(\tilde{q}^S_m) = \lambda\, \mathbb{E}\left[R - R \cdot \left(\dfrac{\mu}{\mu - \tilde{q}^S_m\Lambda}\right)^2\right] < 0,
    \end{align*}
    which contradicts the fact that
 ${\text{Rev}^S}'(\tilde{q}^S_m) = 0$. Hence, $\tilde{q}^S_m = 0$ if $\tilde{q}^S_e = 0$.

    Now suppose instead that $\tilde{q}^S_e > 0$. Arguing by contradiction, suppose also that $\tilde{q}^S_m > \tilde{q}^S_e$. Recall that $\text{Rev}^S(0) = 0$ and $\text{Rev}^S(\tilde{q}^S_m) > \text{Rev}^S(\tilde{q}^S_e) = 0$, since $\tilde{q}^S_m$ is the unique critical point that maximizes $\text{Rev}^S(\cdot)$. Also, $\tilde{q}^S_e$ satisfies $\mathbb{E}\left[R - CW(\tilde{q}^S_e\Lambda)\right] = 0$ by definition, which implies that $\text{Rev}^S(\tilde{q}^S_e) = 0$. Collecting all of these observations yields
    \begin{align*}
        \dfrac{\tilde{q}^S_m - \tilde{q}^S_e}{\tilde{q}^S_m}\cdot\text{Rev}^S(0) +  \dfrac{\tilde{q}^S_e}{\tilde{q}^S_m} \cdot \text{Rev}^S(\tilde{q}^S_m) > 0 = \text{Rev}^S(\tilde{q}^S_e),
    \end{align*}
    which contradicts the strict concavity of $\text{Rev}^S(\cdot)$. Hence, $\tilde{q}^S_e > 0$ implies $\tilde{q}^S_m \leq \tilde{q}^S_e$,
i.e., $q^S_m \leq q^S_e$.
\end{proof}

We make the assumption that the manager cannot charge negative fee to the customers, which gives us the following result.

\begin{proposition} \label{prop:2}
In an M/M/1 queue, when the SO charges a non-negative fee to the customers, i.e., \(p^S_s \geq 0\), we have \(q^S_s \leq q^S_e\). Equality occurs when $p^S_s=0$. 
\end{proposition}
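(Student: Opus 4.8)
The plan is to exploit the fact that the social optimizer acts on the system only through the entrance fee: once $p^S_s$ is posted, the shared-belief equilibrium responds with the joining probability $q^S(p^S_s)$ from \eqref{q^S}, so by definition $q^S_s=q^S(p^S_s)$. Since the no-fee equilibrium is exactly the individual equilibrium, $q^S(0)=q^S_e$, it then suffices to show that the best-response map $p\mapsto q^S(p)$ is non-increasing; the assumption $p^S_s\ge 0$ immediately gives $q^S_s=q^S(p^S_s)\le q^S(0)=q^S_e$.

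First I would establish the monotonicity of $q^S(\cdot)$. For a fixed price $p$, $\tilde q^S(p)$ is the root of $q\mapsto\text{U}^S(q)=R-p-C\,\mathbb{E}[W(q\Lambda)]$ from \eqref{u^s} (when no such root exists the customers balk and $q^S(p)=0$). Because $W(q\Lambda)$ is increasing in $q$, this map is strictly decreasing in $q$, and it is clearly strictly decreasing in $p$ as well. Hence for $0\le p_1<p_2$, evaluating the utility under $p_2$ at the point $q=\tilde q^S(p_1)$ yields the value $-(p_2-p_1)<0$, which forces the root under $p_2$ to satisfy $\tilde q^S(p_2)<\tilde q^S(p_1)$; applying the truncation $\min\{\cdot,1\}$ preserves the inequality, so $q^S(p_1)\ge q^S(p_2)$, with strict inequality except when the common value sits at one of the endpoints $0$ or $1$, where $q^S(\cdot)$ is flat. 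Evaluating the definitions at $p=0$ shows $\tilde q^S(0)=\tilde q^S_e$, hence $q^S(0)=q^S_e$.

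The conclusion is now immediate: $q^S_s=q^S(p^S_s)\le q^S(0)=q^S_e$ since $p^S_s\ge0$. For the equality statement, $p^S_s=0$ gives $q^S_s=q^S(0)=q^S_e$ at once; conversely, when $0<q^S_e<1$ the strict monotonicity of $q^S(\cdot)$ on the interior region shows $q^S_s=q^S_e$ can hold only at $p^S_s=0$, while in the degenerate cases $q^S_e\in\{0,1\}$ equation \eqref{SWSp} makes $\text{SW}^S(p)$ locally constant, so the social optimum may be taken at $p^S_s=0$ without loss.

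The main obstacle is handling the truncation at $1$ (and, symmetrically, the balking region) cleanly, so as to correctly identify where $q^S(\cdot)$ is strictly decreasing versus flat; this, together with ensuring $\mathbb{E}[W(q\Lambda)]<\infty$ over the range of fees the social optimizer would actually consider, is exactly where the bounded-support convention $\bar\lambda=\mu$ and the remark that the manager keeps the effective rate below $\mu$ come into play. A secondary point is that $p^S_s$ must be a genuine maximizer of $\text{SW}^S(p)$ over $p\ge0$, which the concavity established in Appendix \ref{Sec:Property} guarantees.
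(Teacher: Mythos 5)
Your proposal is correct and follows essentially the same route as the paper: both arguments reduce the claim to the monotonicity of the equilibrium response $p \mapsto q^S(p)$, which follows from the fact that $p = \mathbb{E}[R - C\,W(q\Lambda)]$ is strictly decreasing in $q$, and then conclude $q^S_s = q^S(p^S_s) \le q^S(0) = q^S_e$ for $p^S_s \ge 0$. Your additional care with the truncation at $1$ and the equality case is a harmless elaboration of the same argument.
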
 

\begin{proof}
Let $\tilde{q}^S_s$ be the unique critical point of $\text{SW}^S(q)$. Let $q^S_s := \min\{\tilde{q}^S_s, 1\}$, which is the joining probability that maximizes the social welfare in the shared belief case.

Since \( p = \mathbb{E}\left[ R - C W(q\Lambda) \right] \) is strictly decreasing in \( q \), \( q^S(p) \) is decreasing in \( p \). Thus, \( \tilde{q}^S_s = q^S(p^S_s) \leq q^S(0) =  \tilde{q}^S_e \), i.e., \(q^S_s \geq q_e^S\). When \( p^S_s = 0\) , \( q^S_s \) is equivalent to \( q^S_e \), corresponding to the case with no control.
\end{proof}

\begin{proposition}\label{prop:3}
    In an M/M/1 queue, for the shared belief case: (i) When the customers' belief about the distribution of the arrival rate satisfies $\mathbb{E}\left[ W^2(q_s^S\Lambda) \right] = W^2(q_s^S\lambda)$, where $\lambda$ is the true arrival rate and $W(\cdot)$ is the expected waiting time, we have that $q^S_m =  q^S_s$. (ii) Given that  $\mathbb{E}\left[ W(q_s^S\Lambda) \right] = W(q_s^S\lambda)$, we have $q^S_m \geq  q^S_s$. Equality occurs when $\Lambda=\lambda$ w.p.1.
\end{proposition}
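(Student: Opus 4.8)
The plan is to work with the first-order conditions for $\text{Rev}^S(q)$ and $\text{SW}^S(q)$ in the M/M/1 case, where $W(q\Lambda) = \frac{1}{\mu - q\Lambda}$, and compare their critical points $\tilde{q}^S_m$ and $\tilde{q}^S_s$. First I would write out $\text{Rev}^S(q) = q\lambda\,\mathbb{E}\left[R - \frac{C}{\mu - q\Lambda}\right]$ and $\text{SW}^S(q) = q\lambda\left(R - \frac{C}{\mu - q\lambda}\right)$, differentiate each, and set the derivatives to zero. This gives two equations: one involving $\mathbb{E}\left[\frac{1}{\mu - q\Lambda}\right]$ and $\mathbb{E}\left[\frac{q\Lambda}{(\mu - q\Lambda)^2}\right]$ (equivalently $\mathbb{E}\left[\frac{\mu}{(\mu-q\Lambda)^2}\right]$), and the other involving the same expressions with $\Lambda$ replaced by the constant $\lambda$. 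Since $\frac{1}{\mu - q\lambda} = W(q\lambda)$ and $\frac{\mu}{(\mu-q\lambda)^2}$ can be related to $W^2$, I would rephrase both first-order conditions purely in terms of $W$ and $W^2$ evaluated at the relevant argument.

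For part (i), the plan is to show that under the hypothesis $\mathbb{E}[W^2(q^S_s\Lambda)] = W^2(q^S_s\lambda)$, the point $q^S_s$ also satisfies the first-order condition for $\text{Rev}^S$. The key algebraic identity to exploit is that $\frac{d}{dq}\,q\,\mathbb{E}[W(q\Lambda)]$ and $\frac{d}{dq}\,q\,W(q\lambda)$ both decompose into a term that is linear in $W$ and a term that is quadratic in $W$ (because $\frac{d}{dq}\frac{q}{\mu-q\Lambda} = \frac{\mu}{(\mu-q\Lambda)^2} = \mu W^2(q\Lambda)$). Imposing $\mathbb{E}[W(q^S_s\Lambda)] = W(q^S_s\lambda)$ is automatic here only if we also know the first moment matches; but actually one needs to check whether the second-moment condition alone suffices, or whether the $\mathbb{E}[W(q^S_s\Lambda)] = W(q^S_s\lambda)$ condition is implicitly needed. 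I would verify carefully which moment conditions are actually invoked: comparing the stationarity equation for SW, namely $R - CW(q\lambda) - Cq\lambda\mu W^2(q\lambda)/\mu$-type expression $= 0$ wait — more precisely $R = C\,\mathbb{E}[W(q\Lambda)] + Cq\,\mathbb{E}[\Lambda\,\mu W^2(q\Lambda)]/\ldots$; I would line up the two equations and read off that equality of the critical points follows exactly when the $W^2$ moments agree, given that at $q = q^S_s$ the SW stationarity already pins down the $W$-level term. Then $q^S_m = q^S_s$ follows from uniqueness of the critical point (strict concavity, proved in the appendix), after also checking the boundary cases where the $\min\{\cdot,1\}$ truncation is active so that the critical points coincide with $1$ simultaneously.

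For part (ii), under only the first-moment hypothesis $\mathbb{E}[W(q^S_s\Lambda)] = W(q^S_s\lambda)$, the plan is to use convexity of $W^2$ as a function of its argument together with the convexity of $w \mapsto w^2$: since $W(x) = \frac{1}{\mu - x}$ is convex and increasing, and squaring preserves convexity for nonnegative convex functions, $\lambda' \mapsto W^2(q\lambda')$ is convex, so Jensen gives $\mathbb{E}[W^2(q^S_s\Lambda)] \geq W^2(q^S_s\,\mathbb{E}[\Lambda])$. The subtlety is that the first-moment condition is stated as $\mathbb{E}[W(q^S_s\Lambda)] = W(q^S_s\lambda)$, not $\mathbb{E}[\Lambda] = \lambda$; since $W$ is strictly convex, $\mathbb{E}[W(q^S_s\Lambda)] = W(q^S_s\lambda)$ forces $\mathbb{E}[\Lambda] \geq \lambda$ (again Jensen), hence $W^2(q^S_s\,\mathbb{E}[\Lambda]) \geq W^2(q^S_s\lambda)$ since $W^2$ is increasing, and chaining these inequalities yields $\mathbb{E}[W^2(q^S_s\Lambda)] \geq W^2(q^S_s\lambda)$. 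Feeding this inequality into the comparison of first-order conditions shows $(\text{Rev}^S)'(q^S_s) \leq 0$ (the extra $W^2$ mass makes the revenue derivative nonpositive at $q^S_s$), and since $\text{Rev}^S$ is strictly concave this gives $\tilde{q}^S_m \leq q^S_s$, hence $q^S_m \leq q^S_s$ — wait, the claim is $q^S_m \geq q^S_s$, so I would need to track the sign of the $W^2$ term in the revenue derivative carefully; the point is that the revenue derivative differs from a multiple of the SW derivative by a term with a definite sign coming from the moment inequality, and that sign should push the revenue maximizer's critical point to one definite side. Equality holds iff Jensen is tight iff $\Lambda$ is degenerate at $\lambda$, which I would note at the end.

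The main obstacle I anticipate is bookkeeping the exact form of the two first-order conditions and correctly identifying the sign of their difference in terms of the moment gap $\mathbb{E}[W^2(q^S_s\Lambda)] - W^2(q^S_s\lambda)$ (and, in part (ii), justifying the chain $\mathbb{E}[W(q^S_s\Lambda)] = W(q^S_s\lambda) \Rightarrow \mathbb{E}[\Lambda] \geq \lambda \Rightarrow \mathbb{E}[W^2(q^S_s\Lambda)] \geq W^2(q^S_s\lambda)$ via the right monotonicity/convexity properties of $W$ and $W^2$). A secondary point requiring care is handling the truncation $q = \min\{\tilde q, 1\}$: I would argue that in the regime where the unconstrained critical point exceeds $1$, both $q^S_m$ and $q^S_s$ equal $1$ and the claimed (in)equalities hold trivially, so the substantive content is entirely in the interior case.
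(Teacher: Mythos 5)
Your part (i) is essentially the paper's argument: in the M/M/1 case the identity $\frac{d}{dq}\bigl(\frac{q}{\mu-q\Lambda}\bigr)=\frac{\mu}{(\mu-q\Lambda)^2}=\mu W^2(q\Lambda)$ collapses both first-order conditions to $R\lambda-C\mu\lambda\,\mathbb{E}[W^2(q\Lambda)]=0$ and $R\lambda-C\mu\lambda\,W^2(q\lambda)=0$ respectively, so the second-moment hypothesis alone makes $q^S_s$ a critical point of $\text{Rev}^S$, and uniqueness of the critical point (strict concavity, Appendix A) finishes it. Your worry that the first-moment condition might be implicitly needed in (i) resolves in the negative, exactly because the linear-in-$W$ and quadratic-in-$W$ pieces merge into a single $W^2$ term.

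Part (ii) as you have set it up has a genuine gap. You claim that $\mathbb{E}[W(q^S_s\Lambda)]=W(q^S_s\lambda)$ together with Jensen forces $\mathbb{E}[\Lambda]\ge\lambda$; the implication goes the other way. Since $W$ is convex and increasing, Jensen gives $W(q^S_s\lambda)=\mathbb{E}[W(q^S_s\Lambda)]\ge W(q^S_s\,\mathbb{E}[\Lambda])$, hence $\mathbb{E}[\Lambda]\le\lambda$. With the corrected sign your chain no longer closes: you would have $\mathbb{E}[W^2(q^S_s\Lambda)]\ge W^2(q^S_s\,\mathbb{E}[\Lambda])$ but $W^2(q^S_s\,\mathbb{E}[\Lambda])\le W^2(q^S_s\lambda)$, which does not yield the inequality $\mathbb{E}[W^2(q^S_s\Lambda)]\ge W^2(q^S_s\lambda)$ that you need to sign $(\text{Rev}^S)'(q^S_s)$. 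The repair is to apply Jensen (equivalently, nonnegativity of variance) to the random variable $W(q^S_s\Lambda)$ itself rather than to $\Lambda$: $\mathbb{E}[W^2(q^S_s\Lambda)]\ge\bigl(\mathbb{E}[W(q^S_s\Lambda)]\bigr)^2=W^2(q^S_s\lambda)$, using the first-moment hypothesis directly. This is exactly what the paper does via $\mathbb{E}[W^2]-\operatorname{Var}[W]=\mathbb{E}[W]^2$, and it gives $(\text{Rev}^S)'(q^S_s)\le 0$, hence $\tilde{q}^S_m\le\tilde{q}^S_s$ by strict concavity, with equality iff $W(q^S_s\Lambda)$ is a.s.\ constant, i.e.\ $\Lambda=\lambda$ w.p.~1. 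Finally, on the direction you flagged: your derived conclusion $q^S_m\le q^S_s$ is the correct one. The proposition statement's ``$q^S_m\ge q^S_s$'' is inconsistent with the paper's own proof, with Theorem 4.4, and with the summary chain $q^S_m\le q^C_s=q^C_m=q^S_s\le q^C_e=q^S_e$; it is a typo in the statement, not an error in your sign-tracking.
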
 \label{2}

\begin{proof}
(i)  We have that  
\[
{\text{SW}^{S}}^{'}(q) = R\lambda - C \mu \lambda \cdot W^2(q\lambda)
\quad \text{and} \quad
{\text{SW}^{S}}^{''}(q) = -2C\lambda^2\mu W^3(q\lambda) \leq 0.
\]
Thus, \(\text{SW}^S(q)\) has a unique critical point \(\tilde{q}_s^S\).

Similarly, since  
\[
{\text{Rev}^{S}}^{'}(q) = R\lambda - C\mu\lambda \cdot \mathbb{E} \left[ W^2(q\Lambda) \right]
\quad \text{and} \quad
{\text{Rev}^{S}}^{''}(q) = \lambda \mathbb{E} \left[ -2C\mu\Lambda W^3(q\Lambda) \right] \leq 0,
\]
\(\text{Rev}^S(q)\) has a unique critical point \(\tilde{q}_m^S\).

The condition \(\mathbb{E}\left[ W^2(\tilde{q}_s^S \Lambda) \right] = W^2(\tilde{q}_s^S \lambda)\) implies that  
\[
(\text{Rev}^C)'(\tilde{q}_s^S) 
= R\lambda - C\mu \lambda \cdot \mathbb{E}\left[W^2(\tilde{q}_s^S \Lambda) \right] 
= R\lambda - C \mu \lambda \cdot W^2(\tilde{q}_s^S \lambda) 
= \text{SW}^{S'}(\tilde{q}_s^S) = 0.
\]
Considering the uniqueness of the critical point, we conclude that \(\tilde{q}_m^S = \tilde{q}_s^S\), i.e., \(q_m^S = q_s^S\).

(ii) Under the condition that $\mathbb{E} \left[ W(\tilde{q}_s^S\Lambda) \right] = W(\tilde{q}_s^S\lambda)$, we have
\begin{align*}
    (\text{Rev}^C)'(\tilde{q}_s^S) &  = R\lambda  -C\mu \lambda\cdot \mathbb{E}\left[W^2(\tilde{q}_s^S\Lambda) \right]\\
    &\leq R\lambda  -C\mu \lambda\cdot (\mathbb{E}\left[W^2(\tilde{q}_s^S\Lambda) \right]-\operatorname{Var}\left[W(\tilde{q}_s^S\Lambda) \right])\\ 
    &=  R\lambda  -C\mu \lambda\cdot \mathbb{E}^2\left[W(\tilde{q}_s^S\Lambda) \right]\\ 
    &= R\lambda - C \mu \lambda \cdot W^2(\tilde{q}_s^S \lambda)\\  
    &= \text{SW}^{S'}(\tilde{q}_s^S) = 0.
\end{align*}  
Recall that $\text{Rev}(\tilde{q}_m^S) = 0$ and that $\text{Rev}^S(\cdot)$ is strictly concave, the above inequality implies that $\tilde{q}_m^S\leq \tilde{q}_s^S$, i.e., $q_m^S\leq q_s^S$. 
\end{proof}

In this proposition, the first conclusion says that, even if the customers have heterogeneous beliefs, it is possible that the desired equilibrium of the RM and SO aligns. Notice that if $\operatorname{Var}[W(q_s^S\Lambda)] = 0$, then $\mathbb{E}[W(q_s^S\Lambda)] = W(q_s^S\lambda)$ indicates $\mathbb{E}[W^2(q_s^S\Lambda)] = W^2(q_s^S\lambda)$. Thus, the second conclusion in this proposition shows that, when the customer belief distribution produces an expected waiting time unbiased from that under the true arrival rate, the existence of variance in the estimation of expected waiting time causes the RM to prefer a higher joining probability than the SO.

Propositions \ref{prop:1} \ref{prop:2} and \ref{prop:3} together give us the following theorem about the relation of equilibria in the shared belief case.

\begin{theorem}
     In an M/M/1 queue, for the shared belief case, when $\mathbb{E}\left[ W(q_s^S\Lambda) \right] = W(q_s^S\lambda)$, we have $q^S_m \leq q^S_s \leq q^S_e$.
\end{theorem}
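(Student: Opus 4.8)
The plan is to get the two one-sided bounds $q^S_m \le q^S_s$ and $q^S_s \le q^S_e$ separately and then chain them, since each is essentially one of the preceding propositions specialized to the hypothesis of the theorem.

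First I would prove the left inequality $q^S_m \le q^S_s$. This is exactly Proposition~\ref{prop:3}(ii): the hypothesis $\mathbb{E}[W(q_s^S\Lambda)] = W(q_s^S\lambda)$ is the unbiasedness condition appearing there, so non-negativity of $\operatorname{Var}[W(q_s^S\Lambda)]$ (equivalently, Jensen applied to $x\mapsto x^2$) yields $\mathbb{E}[W^2(q_s^S\Lambda)] \ge \big(\mathbb{E}[W(q_s^S\Lambda)]\big)^2 = W^2(q_s^S\lambda)$, which forces $(\mathrm{Rev}^S)'(\tilde q_s^S) = R\lambda - C\mu\lambda\,\mathbb{E}[W^2(\tilde q_s^S\Lambda)] \le R\lambda - C\mu\lambda\,W^2(\tilde q_s^S\lambda) = (\mathrm{SW}^S)'(\tilde q_s^S) = 0$. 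Strict concavity of $\mathrm{Rev}^S$ (established in the appendix and used in Proposition~\ref{prop:3}) then gives $\tilde q_m^S \le \tilde q_s^S$, and taking the minimum with $1$ gives $q^S_m \le q^S_s$.

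Next I would prove the right inequality $q^S_s \le q^S_e$. Here I invoke the standing assumption that the manager cannot post a negative fee, so $p^S_s \ge 0$, and then apply Proposition~\ref{prop:2}: since $p = \mathbb{E}[R - CW(q\Lambda)]$ is strictly decreasing in $q$, the equilibrium map $p\mapsto q^S(p)$ is decreasing, whence $q^S_s = q^S(p^S_s) \le q^S(0) = q^S_e$, with equality precisely when $p^S_s = 0$. Combining the two bounds yields $q^S_m \le q^S_s \le q^S_e$; the weaker bound $q^S_m \le q^S_e$ of Proposition~\ref{prop:1} is then automatic and serves only as a consistency check, though it is worth noting that, unlike the chain above, it requires no assumption on $\Lambda$.

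I do not expect a substantive obstacle — the analytic work was done in Propositions~\ref{prop:1}--\ref{prop:3}, and this theorem is just their assembly. The only points that need care are that the unbiasedness condition is imposed at the SO's optimizer $q_s^S$, which is legitimate because that optimizer is unique by strict concavity of $\mathrm{SW}^S$, and that the right-hand inequality genuinely relies on $p^S_s \ge 0$ and would fail if the social optimizer were permitted to subsidize customers.
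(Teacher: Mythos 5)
Your proposal is correct and matches the paper's approach exactly: the paper states this theorem as an immediate consequence of Propositions \ref{prop:2} and \ref{prop:3}, obtaining $q^S_m \le q^S_s$ from the unbiasedness condition via Proposition \ref{prop:3}(ii) and $q^S_s \le q^S_e$ from the non-negative-fee assumption via Proposition \ref{prop:2}, precisely as you do. Your added remarks (that the condition is imposed at the unique SO optimizer, and that the right-hand inequality hinges on $p^S_s \ge 0$) are accurate and consistent with the paper's proofs of those propositions.
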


The following proposition bridges the relation between the equilibrium in the classical case and the shared belief case.
\begin{proposition}\label{prop:4} In an M/M/1 queue:
   (i) For any fixed $\lambda$, $q^S_s$ remains the same, independent of the distribution of $\Lambda$. Thus, $q^S_s = q^C_s$.
   (ii) Given that  $\mathbb{E}\left[ W(q_e^S\Lambda) \right] = W(q_e^S\lambda)$, \(q_e^S = q_e^C\).
\end{proposition}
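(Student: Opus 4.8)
\emph{Part (i).} This part is immediate from the structural identity already recorded in the excerpt. By \eqref{SWSq} we have $\text{SW}^S(q) = q\lambda\bigl(R - CW(q\lambda)\bigr) = \text{SW}^C(q)$, and in the M/M/1 case $W(q\lambda) = 1/(\mu - q\lambda)$, so the right-hand side is a function of $q$ and the true arrival rate $\lambda$ only (together with the fixed primitives $R,C,\mu$): the belief distribution $\Lambda$ does not enter. Since $q^S_s$ is the maximizer of $\text{SW}^S(\cdot)$ over $[0,1]$ and this objective is strictly concave (Appendix \ref{Sec:Property}), the maximizer is unique and depends on $\lambda$ alone; it is therefore unchanged if $\Lambda$ is replaced by the point mass at $\lambda$, which is exactly the classical case, giving $q^S_s = q^C_s$.

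\emph{Part (ii).} Recall that $q^S_e$ is the individual equilibrium with no manager, i.e.\ with $p=0$, so it either solves $\text{U}^S(q) = R - C\,\mathbb{E}[W(q\Lambda)] = 0$ in the interior of $[0,1]$ or sits at an endpoint; likewise $q^C_e$ either solves $\text{U}^C(q) = R - CW(q\lambda) = 0$ in the interior or at an endpoint. The key move is to use the hypothesis $\mathbb{E}[W(q^S_e\Lambda)] = W(q^S_e\lambda)$ to transfer information between the two utility functions: evaluating the classical utility at $q = q^S_e$ gives $\text{U}^C(q^S_e) = R - CW(q^S_e\lambda) = R - C\,\mathbb{E}[W(q^S_e\Lambda)] = \text{U}^S(q^S_e)$.

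To finish, I would invoke monotonicity. In M/M/1 the map $q\mapsto W(q\lambda) = 1/(\mu-q\lambda)$ is continuous and strictly increasing on $[0,1]$ (using $\lambda > 0$, which holds since $\lambda_{min} < \lambda$), so $\text{U}^C(\cdot)$ is strictly decreasing; hence $q^C_e$ is unique and characterized by $\text{U}^C(q^C_e) = 0$ when interior, by $q^C_e = 1$ when $\text{U}^C(1)\ge 0$, and by $q^C_e = 0$ when $\text{U}^C(0) = R - C/\mu = 0$. Feeding $q = q^S_e$ into this characterization through the identity above yields $q^C_e = q^S_e$ in every case: if $q^S_e$ is interior then $\text{U}^S(q^S_e)=0$ forces $\text{U}^C(q^S_e)=0$; if $q^S_e = 1$ then $\text{U}^S(1)\ge 0$ forces $\text{U}^C(1)\ge 0$; and $q^S_e = 0$ can occur only when $R = C/\mu$, which forces $\text{U}^C(0)=0$. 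The only delicate points are keeping track of these endpoint cases and observing that $\mathbb{E}[W(q\Lambda)]$ is genuinely strictly increasing in $q$ — which needs $P(\Lambda>0)>0$, guaranteed by $\lambda_{max}\ge\lambda>0$ — so that $q^S_e$ is itself well-defined and the comparison is unambiguous; I do not anticipate any obstacle beyond this routine case analysis.
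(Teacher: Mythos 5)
Your proof is correct and follows essentially the same route as the paper, which disposes of (i) by noting that $\text{SW}^S(q)$ does not involve $\Lambda$ and of (ii) by directly comparing \eqref{u^u} and \eqref{u^s} under the unbiasedness hypothesis. Your version merely spells out the uniqueness/monotonicity and endpoint details that the paper leaves implicit.
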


\begin{proof}
    (i) This follows immediately from the formula for \(\text{SW}^S(q)\), which does not contains the random variable $\Lambda$. However, the corresponding fee \(p^C_s\) is not independent of \(\Lambda\). (ii) This is a direct consequence of comparing \eqref{u^u} and \eqref{u^s}.
\end{proof}

The first part of Proposition \ref{prop:4} can be intuitively explained by the fact that when a SO considers the fee charged to the customer as a transfer payment, the SO disregards it and focuses solely on finding the optimal joining probability, while neglecting the fee charged to customers to achieve this optimality, which is influenced by customer beliefs.

In conclusion, recall that in the classical case that $q^C_s = q^C_m \leq q^C_e$. Compare this with the shared belief case with the condition that $\mathbb{E}\left[ W(q_s^S\Lambda) \right] = W(q_s^S\lambda)$. Then, we have $q^S_m\leq q^C_s = q^C_m = q^S_s\leq q_e^C = q^S_e$.

\subsection{Impact on Revenue}\label{Chap: ImpactRev}
\subsubsection{Impact Under Fixed $p$ and $q$} \label{Chap: ImpactRevFixed}

For the M/G/1 queue, comparing the expressions for the revenue, in terms of \(p\), in \eqref{RevUp}, \eqref{RevSp}, and \eqref{RevPp} produces the following theorems that describe revenue relationships among the three information levels under a fixed service fee $p$. The relationship between $\text{Rev}^S$ and $\text{Rev}^C$ is primarily determined by the magnitude of \(\lambda\).
\begin{proposition} \label{thm:Rev_U_S}
In an M/G/1 queue: 
\begin{itemize}
\item[(i)] For a fixed \(p\), the following hold: 
\begin{itemize}
    \item[(a)] when \(\lambda \leq \mathbb{E}[\Lambda]\), \(\text{Rev}^S(p) \leq \text{Rev}^C(p),\)
    \item[(b)] when \(\mathbb{E}[\Lambda]<\lambda \leq \bar{\lambda}\), if  \(\xi(p)\geq\lambda\), then \(\text{Rev}^S(p) \leq \text{Rev}^C(p),\) 
    \item[(c)] when \(\lambda > \bar{\lambda}\), if \(\xi(p)\geq \bar{\lambda}\), then \(\text{Rev}^S(p) \geq \text{Rev}^C(p).\) 
    \end{itemize} 

    \item[(ii)] Given a fixed \(q\), when $\lambda > \mathbb{E}[\Lambda]$, there exists a $q_0 >0$ such that for any $q<q_0$,  we have $Rev^S(q)> Rev^C(q)$. When $\lambda < \mathbb{E}[\Lambda]$, there exists a  $q_0$ such that for any $q<q_0$,  we have $Rev^S(q)< Rev^C(q)$.
    
\end{itemize}
\end{proposition}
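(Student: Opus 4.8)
The plan is to write both revenue functions in the common ``price $\times$ effective throughput'' form and compare throughputs. From \eqref{RevUp} (together with \eqref{q^C}) we have $\text{Rev}^C(p)=p\min\{\xi(p),\lambda\}$, and from \eqref{RevSp} (together with \eqref{q^S}) we have $\text{Rev}^S(p)=p\lambda\,q^S(p)=p\min\{\lambda\,\tilde{q}^S(p),\lambda\}$; in each case the effective throughput is at most $\lambda$. (We may assume $\text{U}^S(0)>0$, i.e. $p<R-C/\mu$, since otherwise no customer ever joins and all revenues vanish.) The structural fact driving everything is a shared identity for $\xi(p)$ and $\tilde{q}^S(p)$: by the definition of $\xi$ one has $W(\xi(p))=(R-p)/C$, while $\tilde{q}^S(p)$ solves $\text{U}^S(q)=0$, i.e. $\mathbb{E}[W(\tilde{q}^S(p)\Lambda)]=(R-p)/C$, so
\[
\mathbb{E}\bigl[W(\tilde{q}^S(p)\Lambda)\bigr]=W(\xi(p)).
\]
Since the Pollaczek--Khinchine mean $W(x)=\tfrac1\mu+\tfrac{xs^2}{2(1-x/\mu)}$ is increasing and convex on $[0,\mu)$, Jensen's inequality gives $W(\xi(p))\ge W(\tilde{q}^S(p)\,\mathbb{E}[\Lambda])$, hence $\xi(p)\ge\tilde{q}^S(p)\,\mathbb{E}[\Lambda]$.

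For part (i)(a), when $\lambda\le\mathbb{E}[\Lambda]$ the last inequality yields $\lambda\,q^S(p)\le\lambda\,\tilde{q}^S(p)\le\tilde{q}^S(p)\,\mathbb{E}[\Lambda]\le\xi(p)$, and trivially $\lambda\,q^S(p)\le\lambda$, so $\lambda\,q^S(p)\le\min\{\xi(p),\lambda\}$ and $\text{Rev}^S(p)\le\text{Rev}^C(p)$. For part (i)(b), the hypothesis $\xi(p)\ge\lambda$ forces $\text{Rev}^C(p)=p\lambda$, the largest revenue attainable at price $p$ for any throughput not exceeding $\lambda$; since $\text{Rev}^S(p)=p\lambda\,q^S(p)\le p\lambda$, we get $\text{Rev}^S(p)\le\text{Rev}^C(p)$ (the conditions $\mathbb{E}[\Lambda]<\lambda\le\bar{\lambda}$ only delineate the regime not already covered by (a)). For part (i)(c), the hypothesis $\xi(p)\ge\bar{\lambda}$ puts us in the second branch of \eqref{q^S}, so $q^S(p)=1$ and $\text{Rev}^S(p)=p\lambda\ge p\min\{\xi(p),\lambda\}=\text{Rev}^C(p)$.

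For part (ii) I would argue directly from the closed forms in $q$. Subtracting \eqref{Rev^Uq} from \eqref{Rev^Sq},
\[
\text{Rev}^S(q)-\text{Rev}^C(q)=q\lambda C\bigl(W(q\lambda)-\mathbb{E}[W(q\Lambda)]\bigr),
\]
so for $q>0$ the sign of the difference equals that of $g(q):=W(q\lambda)-\mathbb{E}[W(q\Lambda)]$. Using $W(x)-\tfrac1\mu=\tfrac{xs^2}{2(1-x/\mu)}$ gives $\tfrac{g(q)}{q}=\tfrac{s^2}{2}\bigl(\tfrac{\lambda}{1-q\lambda/\mu}-\mathbb{E}[\tfrac{\Lambda}{1-q\Lambda/\mu}]\bigr)$, and since $\Lambda$ has bounded support inside $[0,\mu)$, dominated convergence yields $\tfrac{g(q)}{q}\to\tfrac{s^2}{2}(\lambda-\mathbb{E}[\Lambda])$ as $q\downarrow0$. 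When $\lambda>\mathbb{E}[\Lambda]$ this limit is positive, so $g(q)>0$ and $\text{Rev}^S(q)>\text{Rev}^C(q)$ on some interval $(0,q_0)$; when $\lambda<\mathbb{E}[\Lambda]$ the limit is negative and the reverse strict inequality holds on some $(0,q_0)$.

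The step with real content is the Jensen identity $\mathbb{E}[W(\tilde{q}^S(p)\Lambda)]=W(\xi(p))$ together with convexity of $W$; parts (i)(b)--(c) are then essentially immediate since one of the two revenues already attains the ceiling $p\lambda$. The only place genuine care is needed is the bookkeeping around the truncations $q^S(p)=\min\{\tilde{q}^S(p),1\}$ and $q^C(p)=\min\{\xi(p)/\lambda,1\}$, i.e.\ keeping track of which branch of \eqref{RevUp} and of \eqref{q^S} is active and combining it correctly with $\xi(p)\ge\tilde{q}^S(p)\mathbb{E}[\Lambda]$ (note $\tilde q^S(p)$ may exceed $1$, which is harmless: the bound $\lambda q^S(p)\le\lambda\tilde q^S(p)$ still holds). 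For part (ii) the only technical point is the passage to the limit under the expectation, which is immediate from the finite support of $\Lambda$ — precisely the reason boundedness of the support was imposed in the model.
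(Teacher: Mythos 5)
Your proposal is correct and follows essentially the same route as the paper: part (i) rests on the identity $\mathbb{E}[W(\tilde{q}^S(p)\Lambda)]=W(\xi(p))$ plus Jensen's inequality for the convex waiting-time function, with (b) and (c) reduced to noting which revenue already attains the ceiling $p\lambda$, and part (ii) is the same local analysis near $q=0$ (your limit of $g(q)/q$ is just a repackaging of the paper's Taylor expansion with $F(0)=F'(0)=0$ and $F''(0)=\lambda Cs^2(\lambda-\mathbb{E}[\Lambda])$). As a minor bonus, your sign bookkeeping in (ii) is cleaner than the paper's, which contains a typo ($F''(0)>0$ where $F''(0)<0$ is meant in the case $\lambda<\mathbb{E}[\Lambda]$).
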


\begin{proof}
We first proof (a) and (b) together.  
\begin{itemize}
    \item[(*)] Suppose \(\xi(p) \leq\lambda \leq \mathbb{E}[\Lambda]\leq \bar{\lambda}\). Since \(\xi(p)\leq\lambda\) and \(\xi(p)\leq\bar{\lambda}\), it follows from \eqref{q^C} and \eqref{q^S} that
\[
\mathbb{E}[W(q^S(p)\Lambda)] = W(q^C(p)\lambda) = \frac{R-p}{C}.
\]
By the convexity of \(W(q\lambda)\), from Jensen's inequality:
\[
\mathbb{E}[W(q^S(p)\Lambda)] \geq W(q^S(p)\mathbb{E}[\Lambda]) \geq W(q^S(p)\lambda).
\]
Thus, 
\[W(q^S(p)\lambda)\leq W(q^C(p)\lambda).\]
By the monotonicity of \(W(q\lambda)\),  \(q^S(p)\leq q^C(p)\). Further, we have \(\text{Rev}^S(p)\leq \text{Rev}^C(p)\).
\item[(**)] When \(\lambda \leq \bar{\lambda} \leq \xi(p)\), \(q^S(p)= q^C(p)=1\), then \(\text{Rev}^S(p) = \text{Rev}^C(p) = p\lambda.\) 
\item[(***)]  When \(\lambda \leq \xi(p) \leq \bar{\lambda}\), \(q^S(p) \leq q^C(p)=1\), then \(\text{Rev}^S(p) \leq \text{Rev}^C(p) = p\lambda.\)

\end{itemize}

Part (*),(**) and (***) together indicate the area in (a) and (b) when \(\text{Rev}^S(p)\leq \text{Rev}^C(p)\).

(c) When \(\bar{\lambda} < \lambda \leq \xi(p)\), it follows from \eqref{q^C} and \eqref{q^S} that \(q^S(p)= q^C(p)=1\), then \(\text{Rev}^S(p) = \text{Rev}^C(p) = p\lambda.\) When \(\bar{\lambda} \leq \xi(p) \leq \lambda\), similarly, then \(\text{Rev}^C(p) \leq \text{Rev}^S(p) = p\lambda.\)

(ii) Let $F(q) = \text{Rev}^S(q)- \text{Rev}^C(q)$. Immediately, we have $F(0)=0$.

The first derivative is
\[
F'(q) = (\text{Rev}^C)'(q)  =\lambda C \left( 
\frac{q\lambda s^2}{2(1 - q\lambda/\mu)}
+ \frac{q\lambda s^2}{2(1 - q\lambda/\mu)^2}- \mathbb{E}\left[ 
\frac{q\Lambda s^2}{2(1 - q\Lambda/\mu)}
\right]-\mathbb{E}\left[\frac{q\Lambda s^2}{2(1 - q\Lambda/\mu)^2} 
\right]
\right).
\]
We have that $F'(0) = 0$.  
The second derivative is 
\[
F''(q) 
=  \lambda C \left(
\frac{\lambda s^2 (2 + q\lambda/\mu)}{2(1 - q\lambda/\mu)^2}
+ \frac{q\lambda^2 s^2 /\mu}{(1 - q\lambda/\mu)^3}-\mathbb{E}{ \left[
\frac{\Lambda s^2 (2 + q\Lambda/\mu)}{2(1 - q\Lambda/\mu)^2}
\right] - \mathbb{E}\left[\frac{q\Lambda^2 s^2 /\mu}{(1 - q\Lambda/\mu)^3}
\right]}
\right).
\]

When $\lambda > \mathbb{E}[\Lambda]$, we have $F''(0) = \lambda Cs^2 (\lambda - \mathbb{E}[\Lambda]) > 0.$ Expand $F(q)$ using Taylor expansion,

 $$F(q)=F(0)+F'(0)q+\dfrac{F''(0)}{2}q^2+o(q^2).$$
 Since $F''(0)>0$, for sufficiently small $q>0$, the leading term $\dfrac{F''(0)}{2}q^2$ dominates $o(q^2)$, ensuring that $F(q)>0$, i.e., $\text{Rev}^S(q)> \text{Rev}^C(q)$. 
 
 When $\lambda < \mathbb{E}[\Lambda]$, we have $F''(0) > 0.$. Similarly, we can show that for sufficiently small $q>0$, $\text{Rev}^S(q)< \text{Rev}^C(q)$.

\end{proof}

Next, we show that the relationship between $\text{Rev}^P(p)$ and $\text{Rev}^C(p)$ depends not only on the arrival rate $\lambda$ and the belief distribution $\Lambda$, but also on other system parameters $R$, $C$ and $\mu$. We further show that this dependency occurs only through $\xi$, rather than through any individual system parameter alone. We present the following theorem in the continuous arrival rate belief case; the discrete version is given in Appendix \ref{Sec:Appendix}.

\begin{theorem} \label{thm:Rev_U_P}
In an M/G/1 queue, suppose \(\Lambda\) follows a discrete distribution. Define the threshold function  \begin{align}
   M(\xi) &= 
   \begin{cases}
        1/\mathbb{E}(1/\Lambda)  & \text{if }\xi \leq  \lambda_{min} , \\
      \left( \int_{\lambda_{min}}^{\xi} \dfrac{f(\Lambda)}{\xi}d\Lambda +\int^{\lambda_{max}}_{\xi}\dfrac{1}{\Lambda} f(\Lambda)d\Lambda\right)^{-1} & \text{if } \lambda_{min}<\xi< \lambda_{max}.
   \end{cases}
   \end{align}
   Given $p$ fixed, 
   \begin{itemize}
       \item[(i)] When  $\xi\geq \lambda_{max}$,  \(\text{Rev}^P(p) = \text{Rev}^C(p)\).
        \item[(ii)] When  $\xi< \lambda_{max}$:
            \begin{itemize}
                \item[(a)] If $\lambda = M(\xi)$, then $\text{Rev}^P(p) = \text{Rev}^C(p)$; 
                \item[(b)]
                If $\lambda > M(\xi)$, then $\text{Rev}^P(p) > \text{Rev}^C(p)$; 
                \item[(c)]
                If $\lambda < M(\xi)$, then $\text{Rev}^P(p) < \text{Rev}^C(p)$.
                
            \end{itemize}
   \end{itemize}
   
   The threshold function $M(\xi)$ is continuous, monotonously increasing in terms of $\xi$ and piecewise convex.
\end{theorem}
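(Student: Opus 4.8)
First, I would reduce the entire statement to a single identity. Throughout write $\xi=\xi(p)$ and let $F$ denote the c.d.f.\ of $\Lambda$ (since the threshold function in the statement is written through a density $f$, I work in the continuous case; the discrete analogue in the appendix is identical with sums in place of integrals). Splitting the expectation defining $Q(p)$ at $\Lambda=\xi$ and comparing with the definition of $M$ yields
\[
\mathbb{E}\!\left[\min\{\xi/\Lambda,1\}\right]=F(\xi)+\xi\!\int_{\xi}^{\lambda_{max}}\!\frac{f(t)}{t}\,dt=\frac{\xi}{M(\xi)}\qquad(0<\xi<\lambda_{max}),
\]
while for $\xi\ge\lambda_{max}$ the minimum is identically $1$ (and for $\xi\le\lambda_{min}$ the first term vanishes, giving $\xi\,\mathbb{E}[1/\Lambda]$, consistent with the first branch of $M$). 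Combining this with $\text{Rev}^P(p)=p\lambda\,\mathbb{E}[Q(p)]$ from \eqref{RevPp}--\eqref{Qp} and $\text{Rev}^C(p)=p\min\{\xi,\lambda\}$ from \eqref{RevUp} gives $\text{Rev}^P(p)=p\lambda\,\xi/M(\xi)$ for $\xi<\lambda_{max}$ and $\text{Rev}^P(p)=p\lambda$ for $\xi\ge\lambda_{max}$. Part (i) is then immediate: $\xi\ge\lambda_{max}>\lambda$ forces $\text{Rev}^C(p)=p\lambda=\text{Rev}^P(p)$.

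For part (ii) the only extra ingredient is the bound $M(\xi)>\xi$ for $\xi\in(\lambda_{min},\lambda_{max})$: since $\int_\xi^{\lambda_{max}}f(t)\,dt=1-F(\xi)>0$ and $1/t<1/\xi$ on $(\xi,\lambda_{max})$, one gets $1/M(\xi)=F(\xi)/\xi+\int_\xi^{\lambda_{max}}f(t)/t\,dt<1/\xi$. Then I would compare on the two branches of $\text{Rev}^C$. If $\lambda\ge\xi$, then $\text{Rev}^C(p)=p\xi$ and
\[
\text{Rev}^P(p)-\text{Rev}^C(p)=\frac{p\xi}{M(\xi)}\bigl(\lambda-M(\xi)\bigr),
\]
so (ii)(a)--(c) read off directly from the sign of $\lambda-M(\xi)$. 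If instead $\lambda<\xi$, then because $\lambda>\lambda_{min}$ we must have $\xi\in(\lambda_{min},\lambda_{max})$, hence $M(\xi)>\xi>\lambda$; thus $\text{Rev}^C(p)=p\lambda$ and $\text{Rev}^P(p)-\text{Rev}^C(p)=\frac{p\lambda}{M(\xi)}\bigl(\xi-M(\xi)\bigr)<0$, which is exactly case (ii)(c) since $\lambda<M(\xi)$ here as well. This exhausts the cases and establishes the trichotomy.

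For the structural claims I would set $\psi(\xi):=\mathbb{E}[\min\{\xi/\Lambda,1\}]=\xi/M(\xi)$, so that on $(\lambda_{min},\lambda_{max})$ we have $\psi(\xi)=F(\xi)+\xi\int_\xi^{\lambda_{max}}f(t)/t\,dt$, hence $\psi'(\xi)=\int_\xi^{\lambda_{max}}f(t)/t\,dt\ge0$, $\psi''(\xi)=-f(\xi)/\xi\le0$ ($\psi$ is concave, being an average of the concave maps $\xi\mapsto\min\{\xi/\Lambda,1\}$), and the useful identity $\psi-\xi\psi'=F$. Continuity of $M=\xi/\psi$ on $[0,\lambda_{max})$ follows from continuity of $F$ and of the integral together with $\psi>0$, plus the fact that the two pieces of $M$ agree at $\xi=\lambda_{min}$ (both equal $1/\mathbb{E}[1/\Lambda]$). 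For monotonicity, $M'(\xi)=(\psi-\xi\psi')/\psi^2=F(\xi)/\psi(\xi)^2\ge0$, which is flat on $[0,\lambda_{min}]$ and strictly positive on $(\lambda_{min},\lambda_{max})$ (note $M'(\lambda_{min}^+)=0$, matching the constant piece). For convexity, $M''(\xi)=\bigl(f(\xi)\psi(\xi)-2F(\xi)\psi'(\xi)\bigr)/\psi(\xi)^3$; writing $f\psi-2F\psi'=fF+\psi'(f\xi-2F)$, this is nonnegative on $[0,\lambda_{min}]$ (where $M$ is constant) and on every subinterval of $(\lambda_{min},\lambda_{max})$ on which $f(\xi)\xi\ge2F(\xi)$ — in particular on a right-neighbourhood of $\lambda_{min}$, where $F\to0$ and $M''(\lambda_{min}^+)=f(\lambda_{min})/\psi(\lambda_{min})^2\ge0\ge M''(\lambda_{min}^-)$ — so $M$ is convex on each such piece.

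The step I expect to be the real obstacle is precisely the piecewise-convexity claim. The quantity $f\psi-2F\psi'$ need not stay nonnegative: when $\Lambda$ has wide support $\psi'$ can be comparable to $f$, and $M''$ does become negative in the interior of $(\lambda_{min},\lambda_{max})$ already for $\Lambda$ uniform on a long enough interval, so $M$ is \emph{not} globally convex there and the word ``piecewise'' is genuinely required. To make ``piecewise'' rigorous one must argue that $f\psi-2F\psi'$ changes sign only finitely often, which I would obtain from a mild regularity hypothesis on $f$ (e.g.\ $f$ piecewise monotone), splitting the $\xi$-axis into finitely many intervals of convexity. Every other step above is elementary once the identity $\mathbb{E}[Q(p)]=\xi/M(\xi)$ and the inequality $M(\xi)>\xi$ on $(\lambda_{min},\lambda_{max})$ are in hand.
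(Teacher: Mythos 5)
Your treatment of parts (i) and (ii), of continuity, and of monotonicity is correct and follows essentially the same route as the paper: both arguments reduce everything to the identity $\mathbb{E}[Q(p)]=\xi/M(\xi)$ (the paper writes $M(\xi)=1/A(\xi)$ with $A=\psi/\xi$ in your notation) together with the bound $M(\xi)>\xi$ on $(\lambda_{\min},\lambda_{\max})$, and then split on whether $\lambda\ge\xi$ or $\lambda<\xi$. Your derivative computations are cleaner: $M'=F/\psi^2\ge 0$ is a tidier way to get monotonicity than the paper's $M'=-A'/A^2$ with $A'=-F(\xi)/\xi^2$, but the content is identical.

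Your suspicion about the piecewise-convexity claim is justified, and in fact you have located an error in the paper's own proof rather than merely a hard step. The paper computes $A''(\xi)=-f(\xi)/\xi-2F(\xi)/\xi^3$, whereas differentiating $A'(\xi)=-F(\xi)/\xi^2$ gives $A''(\xi)=-f(\xi)/\xi^2+2F(\xi)/\xi^3$; it also writes $M''=-\bigl(A''A+2(A')^2\bigr)/A^3$ in place of the correct $M''=\bigl(2(A')^2-A''A\bigr)/A^3$. Carrying out the computation correctly yields exactly your expression $M''=\bigl(f\psi-2F\psi'\bigr)/\psi^3$ with $\psi=F+\xi\int_\xi^{\lambda_{\max}}f(t)/t\,dt$ and $\psi'=\int_\xi^{\lambda_{\max}}f(t)/t\,dt$, which is not sign-definite: for $\Lambda$ uniform on $[a,b]$ the numerator is proportional to $(\xi-a)+(2a-\xi)\ln(b/\xi)$, which is negative for instance at $\xi=0.1$ when $a=0.01$, $b=1$. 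The situation is even starker in the discrete version of the appendix: between consecutive atoms one has $f\equiv 0$, so $M''=-2F\psi'/\psi^3<0$ strictly, i.e.\ $M$ is piecewise \emph{concave} there (e.g.\ for $\Lambda\in\{1,2\}$ with equal weights, $M(\xi)=4\xi/(2+\xi)$ on $(1,2)$, which is concave), directly contradicting the stated sign. So the convexity assertion should be dropped or corrected; your proposed repair (a regularity hypothesis on $f$ guaranteeing finitely many sign changes of $f\psi-2F\psi'$) would salvage a weaker "finitely many convex/concave pieces" statement, but not the claim as written. Note that none of this affects parts (i) and (ii), which use only monotonicity and $M(\xi)>\xi$.
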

\begin{proof}

(i) From \eqref{RevPp}, we have
$$\text{Rev}^P(p) = p\lambda \mathbb{E}[Q(p)],$$
where 
\begin{align} \label{EQp}
   \mathbb{E}[Q(p)] &= 
   \begin{cases}
        1   & \text{if }\xi\geq \lambda_{max} , \\
       \int_{\lambda_{min}}^{\xi} f(\Lambda)d\Lambda +\int^{\lambda_{max}}_{\xi}\xi\dfrac{1}{\Lambda} f(\Lambda)d\Lambda  & \text{if } \lambda_{min}<\xi< \lambda_{max} ,\\
      \xi \mathbb{E}[1/\Lambda] & \text{if } \xi \leq  \lambda_{min}.
   \end{cases}
\end{align}

When $\xi\geq \lambda$, the threshold comes from comparing $\text{Rev}^P(p) = p\lambda$ and $\text{Rev}^C(p) = p\lambda$, which means that for any \( \xi\), \(\text{Rev}^P(p) = \text{Rev}^C(p)\). 

(ii) \begin{itemize}
\item[(*)] When $\lambda \le \xi < \lambda_{\max}$, since $0<\mathbb{E}[Q(p)]<1$, we have
\[
\text{Rev}^C(p)=p\lambda \;>\; p\lambda\,\mathbb{E}[Q(p)] \;=\; \text{Rev}^P(p).
\]

\item[(**)] When $\xi<\lambda$, the equality $\text{Rev}^P(p)=\text{Rev}^C(p)$ holds iff $\lambda=M(\xi)$.

\quad(a) If $\xi \le \lambda_{\min}$, comparing $\text{Rev}^P(p)=p\lambda\,\xi\,\mathbb{E}[1/\Lambda]$ with $\text{Rev}^C(p)=p\xi$ gives equality iff \( \lambda=\frac{1}{\mathbb{E}[1/\Lambda]}=M(\xi).\)

\quad(b) If $\lambda_{\min}<\xi<\lambda$, define
\[
A(\xi):=\int_{\lambda_{\min}}^{\xi}\frac{f(\Lambda)}{\xi}\,d\Lambda
\;+\;\int_{\xi}^{\lambda_{\max}}\frac{1}{\Lambda}\,f(\Lambda)\,d\Lambda,
\qquad \lambda_{\min}<\xi<\lambda_{\max}.
\]
Then $\text{Rev}^P(p)=\text{Rev}^C(p)$ iff
\[
\lambda=\frac{1}{A(\xi)}=M(\xi).
\]
Moreover, if $\lambda>M(\xi)$ then $\text{Rev}^P(p)>\text{Rev}^C(p)$, and if $\lambda<M(\xi)$ then $\text{Rev}^P(p)<\text{Rev}^C(p)$.
\end{itemize}

Finally, since $M(\xi)=\xi/\mathbb{E}[Q(p)]$ with $0<\mathbb{E}[Q(p)]<1$, we have $M(\xi)>\xi$ whenever $\xi<\lambda_{\max}$. Combining Part (*) and (**) yields the claim in (ii).

Next, we show the properties of the threshold function \( M(\xi) \). When $\lambda_{min}<\xi< \lambda_{max}$, we have
\[
A^{'}(\xi) = -\int_{\lambda_{\min}}^{\xi} \frac{f(\Lambda)}{\xi^2}\,d\Lambda < 0,
\]
which implies
\[
M^{'}(\xi) = -\frac{A^{'}(\xi)}{A^2(\xi)} > 0.
\]
Furthermore,
\[
M^{''}(\xi) = -\frac{A^{''}(\xi)\,A(\xi) + 2\left[A^{'}(\xi)\right]^2}{A^3(\xi)},
\]
where
\[
A^{''}(\xi) = - \frac{f(\xi)}{\xi} - \int_{\lambda_{\min}}^{\xi} \frac{2f(\Lambda)}{\xi^3}\,d\Lambda.
\]
Then, we have
    $$M^{''}(\xi) = \dfrac{B(\xi)}{A^3(\xi)},$$
  where 

\begin{align*}
B(\xi) =\dfrac{2}{\xi^3} \int_{\lambda_{min}}^{\xi} f(\Lambda) d\Lambda 
\int_{\xi}^{\lambda_{max}} \dfrac{f(\Lambda)}{\Lambda} d\Lambda  > 0.
\end{align*}
Since \(M''(\xi)>0\) and \(M'(\xi)>0\), the threshold at interval $\lambda_{min}<\xi< \lambda_{max}$ is monotonously increasing and convex. Moreover, we can extend this conclusion to interval $\xi
\leq\lambda_{min}$ where \(M(\xi)\) is a constant.
      
We can verify that $M(\lambda_{min}) = 1/\mathbb{E}(1/\Lambda)$ and $M(\lambda_{max}) = \lambda_{max}$, showing that \(M(\xi)\) is continuous.
\end{proof}

Note that this theorem holds for any distribution $f(\Lambda)$. Given any distribution of $\Lambda$, $M(\xi)$ is directly computable and we can know the relationship between $\text{Rev}^P(p)$ and $\text{Rev}^C(p)$ for certain.

This theorem indicates that, compared to the classical belief case, the firm are less likely to benefit from lowering the price to incentivize joining behavior. For customers with a sufficiently low arrival rate belief, lowering the price will leads to a higher threshold effective arrival rate, but will not affect their strategy to join the queue with probability 1. The lower the $p$ is, the more percentage of customers become unaffected by the price change. If the fee $p$ is high enough to achieve a sufficiently low $\xi$, all the customers has a positive probability to balk. In this case, reducing the fee $p$ will incentivize all customers to join with a higher probability, as is in the classical belief case and in the shared belief case. 

\begin{proposition}\label{thm:Rev_S_P}
In an M/G/1 queue, given fixed \( p \) and the corresponding value \( \xi \), for any \( \lambda \):

\begin{itemize}
    \item[(i)] If \( \xi \geq \lambda_{\max} \), then \( \text{Rev}^P(p) = \text{Rev}^S(p) \).
    
    \item[(ii)] If \( \xi < \lambda_{\max} \), then there exists \( \xi_0 \in [\lambda_{\min}, \bar{\lambda}] \) such that:
    \begin{itemize}
        \item[(a)] If \( \xi = \xi_0 \), then \( \text{Rev}^P(p) = \text{Rev}^S(p) \); 
        \item[(b)] If \( 0 \leq \xi < \xi_0 \), then \( \text{Rev}^P(p) > \text{Rev}^S(p) \);
        
        \item[(c)] If \( \xi_0 < \xi < \lambda_{\max} \), then \( \text{Rev}^P(p) < \text{Rev}^S(p) \).
    \end{itemize}
\end{itemize}
\end{proposition}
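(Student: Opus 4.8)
The plan is to collapse the comparison to a one-dimensional statement about functions of $\xi$ alone, and then establish a single-crossing property. Dividing \eqref{RevPp} and \eqref{RevSp} by the common positive factor $p\lambda$, the sign of $\text{Rev}^P(p)-\text{Rev}^S(p)$ equals the sign of $D(\xi):=g_P(\xi)-g_S(\xi)$, where $g_P(\xi):=\mathbb{E}[Q(p)]=\mathbb{E}[\min\{\xi/\Lambda,1\}]$ and $g_S(\xi):=q^S(p)=\min\{\tilde q^S(p),1\}$, with $\tilde q^S$ the root of $\mathbb{E}[W(q\Lambda)]=W(\xi)$. Both functions depend on $p$ only through $\xi$ and not on the true rate $\lambda$, which is exactly why the threshold in the statement is $\lambda$-free; moreover $W$ is affine in $\mu/(\mu-\lambda)$, so the equation defining $g_S$ coincides with its M/M/1 counterpart and the whole comparison may be carried out with $W(\lambda)=1/(\mu-\lambda)$. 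Part (i) is then immediate: if $\xi\ge\lambda_{\max}$ then $Q(p)=1$ a.s., and $\mathbb{E}[W(\Lambda)]\le W(\lambda_{\max})\le W(\xi)=(R-p)/C$ forces $q^S(p)=1$ by \eqref{q^S}, so $\text{Rev}^P(p)=\text{Rev}^S(p)$.

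For part (ii) I would first pin down the sign of $D$ near the two endpoints of $[0,\lambda_{\max})$. For $\xi\le\lambda_{\min}$ we have $Q(p)=\xi/\Lambda$ a.s., hence $g_P(\xi)=\xi\,\mathbb{E}[1/\Lambda]$, while Jensen's inequality applied to the convex map $\lambda\mapsto W(g_S\lambda)$ gives $W(\xi)=\mathbb{E}[W(g_S\Lambda)]\ge W(g_S\,\mathbb{E}[\Lambda])$, i.e.\ $g_S(\xi)\le\xi/\mathbb{E}[\Lambda]\le\xi\,\mathbb{E}[1/\Lambda]$; thus $D\ge 0$ on $[0,\lambda_{\min}]$, strictly for $\xi>0$ by the arithmetic--harmonic mean inequality since $\Lambda$ is non-degenerate. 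At the other end, let $\bar\lambda:=W^{-1}(\mathbb{E}[W(\Lambda)])$ be the saturation threshold of \eqref{q^S}; non-degeneracy gives $\lambda_{\min}<\bar\lambda<\lambda_{\max}$, and for $\xi\in[\bar\lambda,\lambda_{\max})$ we have $g_S(\xi)=1>g_P(\xi)$, so $D<0$. Hence $D>0$ near $0$ and $D<0$ on $[\bar\lambda,\lambda_{\max})$, and any value $\xi_0$ separating the two regimes lies in $[\lambda_{\min},\bar\lambda]$, as the statement asserts.

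It remains to show that $D$ changes sign exactly once on $(\lambda_{\min},\bar\lambda)$. I would recast this via threshold functions: writing $g_P(\xi)=\xi/M_P(\xi)$ and $g_S(\xi)=\xi/M_S(\xi)$, the sign of $D(\xi)$ equals that of $M_S(\xi)-M_P(\xi)$, where $M_P$ is exactly the function $M$ of Theorem \ref{thm:Rev_U_P}, already known to be continuous, strictly increasing and convex. The crux is a transversality claim: at every point $\hat\xi\in(\lambda_{\min},\bar\lambda)$ with $g_P(\hat\xi)=g_S(\hat\xi)$ one has $g_S'(\hat\xi)>g_P'(\hat\xi)$, equivalently $D'(\hat\xi)<0$. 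Granting this, since $D>0$ just to the right of $\lambda_{\min}$, $D$ is strictly decreasing through every zero and so has at most one zero; combined with the endpoint signs, there is a unique $\xi_0\in[\lambda_{\min},\bar\lambda]$ with $D>0$ on $[0,\xi_0)$ and $D<0$ on $(\xi_0,\lambda_{\max})$, which is precisely (a)--(c).

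To verify transversality I would evaluate both derivatives at a crossing in terms of the common value $\hat q:=g_P(\hat\xi)=g_S(\hat\xi)$. From $g_P(\xi)=\xi\,\mathbb{E}[\Lambda^{-1}\mathbf{1}\{\Lambda>\xi\}]+\mathbb{P}(\Lambda\le\xi)$ one gets $g_P'(\hat\xi)=\mathbb{E}[\Lambda^{-1}\mathbf{1}\{\Lambda>\hat\xi\}]$, while implicit differentiation of $\mathbb{E}[(\mu-g_S\Lambda)^{-1}]=(\mu-\xi)^{-1}$, together with $\mathbb{E}[(\mu-\hat q\Lambda)^{-1}]=(\mu-\hat\xi)^{-1}$ at the crossing, gives $g_S'(\hat\xi)=(\mathbb{E}[(\mu-\hat q\Lambda)^{-1}])^{2}/\mathbb{E}[\Lambda(\mu-\hat q\Lambda)^{-2}]$. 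The inequality $g_S'(\hat\xi)>g_P'(\hat\xi)$ then reduces to
\[
\mathbb{E}\!\left[\Lambda(\mu-\hat q\Lambda)^{-2}\right]\;\mathbb{E}\!\left[\Lambda^{-1}\mathbf{1}\{\Lambda>\hat\xi\}\right]\;<\;\big(\mathbb{E}[(\mu-\hat q\Lambda)^{-1}]\big)^{2}.
\]
I expect this to be the main obstacle. A plain Cauchy--Schwarz bound on the left-hand side replaces $\mathbb{E}[\Lambda^{-1}\mathbf{1}\{\Lambda>\hat\xi\}]$ by $\mathbb{E}[\Lambda^{-1}]$ and yields the reverse inequality, so one must genuinely exploit the truncation at $\hat\xi$ together with the coupling relation $\mathbb{E}[(\mu-\hat q\Lambda)^{-1}]\,(\mu-\hat\xi)=1$ that links $\hat q$ and $\hat\xi$; splitting $\Lambda$ at $\hat\xi$ and comparing the contributions of $\{\Lambda\le\hat\xi\}$ and $\{\Lambda>\hat\xi\}$, or equivalently analyzing the sign of $G(\xi):=\mathbb{E}[W(g_P(\xi)\Lambda)]-W(\xi)$ (which also has the sign of $D$ on $[0,\lambda_{\max})$), looks like the workable route. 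The discrete-$\Lambda$ case is handled identically, with sums in place of integrals and one-sided derivatives at the atoms of $\Lambda$.
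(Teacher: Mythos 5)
The completed portion of your argument is essentially the paper's proof: part (i) is handled identically, and your endpoint analysis (Jensen at $\xi\le\lambda_{\min}$ giving $g_S(\xi)\le\xi/\mathbb{E}[\Lambda]\le\xi\,\mathbb{E}[1/\Lambda]=g_P(\xi)$, and $g_S=1>g_P$ on $[\bar\lambda,\lambda_{\max})$) is exactly how the paper pins down the signs at the two ends before invoking continuity and monotonicity of $\mathbb{E}[Q(p)]$ and $q^S(p)$ to produce a crossing point $\xi_0\in[\lambda_{\min},\bar\lambda]$. Your reduction of the whole comparison to functions of $\xi$ alone, and the observation that $W$ is affine in $1/(\mu-x)$ so the M/G/1 case collapses to the M/M/1 computation, are clean and correct.

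The genuine gap is the transversality claim, and you have named it yourself: the inequality $\mathbb{E}[\Lambda(\mu-\hat q\Lambda)^{-2}]\,\mathbb{E}[\Lambda^{-1}\mathbf{1}\{\Lambda>\hat\xi\}]<\bigl(\mathbb{E}[(\mu-\hat q\Lambda)^{-1}]\bigr)^{2}$ at a crossing is asserted as the crux but never established; as you note, Cauchy--Schwarz runs the wrong way, and the proposed exploitation of the truncation at $\hat\xi$ is only a plan. Without it you have existence of a sign change on $[\lambda_{\min},\bar\lambda]$ but not the single-crossing structure that parts (b) and (c) assert, namely that $D(\xi)>0$ on all of $[0,\xi_0)$ and $D(\xi)<0$ on all of $(\xi_0,\lambda_{\max})$. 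You should be aware, however, that the paper's own proof stops at the same point: it establishes the endpoint signs and extracts $\xi_0$ by the intermediate value theorem, but monotonicity of $g_P$ and $g_S$ separately does not imply their difference crosses zero only once, so uniqueness of the crossing and the global sign pattern are not proved there either. In that sense your proposal does not fall short of the published argument --- it reproduces it and then correctly isolates the missing single-crossing lemma that would be needed to justify the statement in full strength; closing that lemma (or weakening (b)--(c) to statements about the smallest and largest crossing points) is what remains.
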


\begin{proof}
(i) When $\lambda_{max} \leq \xi$, for any \(p\), $\mathbb{E}[Q(p)]=1$. Since \(\bar{\lambda}\leq \lambda_{max}\), we have \(q^S(p) = 1\), which indicates $\text{Rev}^P(p) = \text{Rev}^S(p)$.

(ii) When $\xi\leq \lambda_{min}$, since \(\bar{\lambda}\geq \lambda_{min}\geq \xi\), we have
\begin{align*}
  \text{Rev}^S(p) = p\cdot q^S(p) \lambda \leq p\cdot\xi(p)\cdot\dfrac{\lambda}{\mathbb{E}[\Lambda]}.  \label{Rev^S}
\end{align*}
We also have
\begin{align*}
  \text{Rev}^P(p) = p\cdot\xi(p)\cdot\mathbb{E}\left[\dfrac{\lambda}{\Lambda}  \right].
\end{align*}
Applying Jensen's inequality to the expressions above yields $\text{Rev}^S(p)\leq \text{Rev}^P(p)$.

We denote the inverse of $\xi(p)$ by $\xi^{-1}(x)$, which is given explicitly by \(\xi^{-1}(x) \;=\; R - \frac{C}{\mu} - \frac{C\,s^2\,\mu\,x}{2\,(\mu - x)},\)
and computes the price~$p$ required to achieve a given threshold effective arrival rate~$x$.  
Since \( \bar{\lambda}\leq\lambda_{max} \), it follows that
\[
\mathbb{E}[Q(\xi^{-1}(\bar{\lambda}))] < 1 = q^S(\xi^{-1}(\bar{\lambda})).
\]
Recall that
\[
\mathbb{E}[Q(\xi^{-1}(\lambda_{\min}))] \geq q^S(\xi^{-1}(\lambda_{\min})).
\]
By the monotonicity and continuity of both \( \mathbb{E}[Q(p)] \) and \( q^S(p) \), there exists \( \xi_0 \in [\lambda_{\min}, \bar{\lambda}]\) such that
\[
\mathbb{E}[Q(\xi^{-1}(\xi_0))] = q^S(\xi^{-1}(\xi_0)).
\]
Let \( p_0 = \xi^{-1}(\xi_0) \). Then \( \text{Rev}^P(p_0) = \text{Rev}^S(p_0) \), as desired.
\end{proof}

Note that this theorem indicates that the relationship between \(\text{Rev}^S(p)\) and \(\text{Rev}^P(p)\) mainly depends on what the threshold effective arrival rate is compared to the range of the customer belief,  but not the actual arrival rate $\lambda$.

To wrap up the conclusions in this section, Figure \ref{RevCompare} illustrates the relationship between the revenue for the three information levels by plotting the thresholds of the relationship switch between each pair of comparisons:

\begin{itemize}
    \item The purple dashed line corresponds to the first part of Proposition~\ref{thm:Rev_U_S}, 
which specifies the conditions used to determine the relationship between the classical 
and shared belief cases. However, the actual threshold cannot be expressed in an explicit analytical form.
The purple solid line therefore represents a possible threshold for the switch in the relationship. 
For any $\lambda$ below this line, we have $\text{Rev}^S(p) < \text{Rev}^C(p)$, 
while for any $\lambda$ above this line, we have $\text{Rev}^S(p) > \text{Rev}^C(p)$. 

    \item The red lines corresponds to Proposition \ref{thm:Rev_U_P},  which indicates that for any combination of \((\xi,\lambda)\) above this line, we have \(\text{Rev}^P(p) > \text{Rev}^C(p)\), and for any combination of \((\xi,\lambda)\) below this line, we have \(\text{Rev}^P(p) \leq \text{Rev}^C(p)\). The red curve indicates that the relationship between the private belief and the classical case depends on both the true arrival rate \(\lambda\) and other system parameters. An intuitive explanation is that the information increase from private to classical is the sum of information increase from private to shared (which depends on \(\xi\)) and from shared to classical (which depends on \(\lambda\)). 

    \item The blue line corresponds to the threshold indicated by Theorem~\ref{thm:Rev_S_P}, which shows that for any $\xi$ 
to the left of this line, we have $\text{Rev}^P(p) < \text{Rev}^S(p)$, 
and for any $\xi$ to the right of this line, we have $\text{Rev}^P(p) > \text{Rev}^S(p)$. Besides, the blue line falls within interval \([\lambda_{min},\bar{\lambda}]\) on the \(\xi\text{-axis}\). 
Note that the blue line intersects the point where the red line and the solid purple line cross; 
otherwise, the ordering of the comparisons would be contradictory.
\end{itemize}

\begin{figure}[h]
    \centering
    \includegraphics[width=1\textwidth]{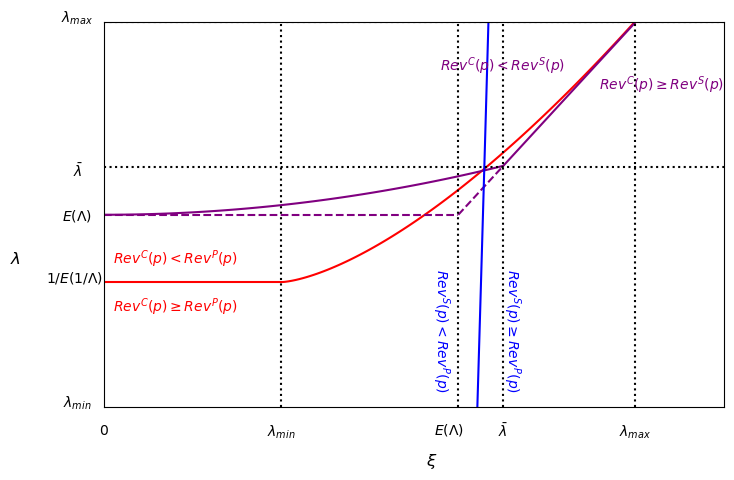}
    \caption{The thresholds for relation switch. }
    \label{RevCompare}
\end{figure}

From this figure, we observe that the revenue comparison between the shared-belief and private-belief cases is more sensitive to $\xi$, whereas the comparison between the classical and shared-belief cases is more sensitive to $\lambda$. Moreover, the three thresholds partition the $(\xi,\lambda)$ space into three regions, within each of which a single case dominates in revenue.

\subsubsection{Optimizing Revenue Over $p$} \label{Chap: ImpactRevOpt}
Thus far, we have examined how information asymmetry affects revenue when either \(p\) or \(q\) is held fixed. However, we are ultimately interested in comparing the optimal revenue across the three scenarios, as this comparison provides insight into decisions about the appropriate level of information disclosure. Denote the optimal revenue by $\text{Rev}^C_{opt}$, $\text{Rev}^{S}_{opt}$ and $\text{Rev}^{P}_{opt}$. 

For the M/M/1 queue, Theorem \ref{thm:Rev_opt_P_U} states that either the private belief or classical case yields the maximum possible benefits depends on which side of the red curve in Figure \ref{RevCompare} the pair (\(\xi,\lambda\)) falls to.

\begin{theorem} \label{thm:Rev_opt_P_U}
In an M/M/1 queue:
\begin{itemize}
    \item[(i)] When $\lambda\leq 1/\mathbb{E}(1/\Lambda)$, $\text{Rev}^{P}_{opt}\leq\text{Rev}^C_{opt}$.
    \item[(ii)] When $\lambda > 1/\mathbb{E}(1/\Lambda)$, let $\xi^C = \mu - \sqrt{\frac{C\mu}{R}}$,  then
    \begin{itemize}
        \item[(a)] If $\lambda \geq M(\xi^C)$,  $\text{Rev}^C_{opt}\leq\text{Rev}^{P}_{opt}$.  
        \item[(b)] If $\lambda < M(\xi^C)$, $\text{Rev}^{P}_{opt}<\text{Rev}^C_{opt}$.
    \end{itemize}
        
\end{itemize}
\end{theorem}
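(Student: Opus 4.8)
The plan is to reduce both parts to the pointwise comparison of $\text{Rev}^P(p)$ and $\text{Rev}^C(p)$ from Theorem~\ref{thm:Rev_U_P}, together with an explicit description of where $\text{Rev}^C$ is maximized. It is convenient to parametrize by the threshold effective arrival rate $\xi=\xi(p)$; writing $g(\xi):=\xi\,p(\xi)$ with $p(\xi)=R-C/(\mu-\xi)$ (the M/M/1 inverse of $\xi(\cdot)$), one has $\text{Rev}^C(p(\xi))=g(\xi)$ for $\xi\le\lambda$ and $\text{Rev}^C(p(\xi))=\lambda\,p(\xi)$ for $\xi>\lambda$, while $\text{Rev}^P(p(\xi))=\lambda\,g(\xi)/M(\xi)$ because $\mathbb{E}[Q(p)]=\xi(p)/M(\xi(p))$. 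A one-line computation gives $g'(\xi)=R-C\mu/(\mu-\xi)^2$, so $g$ is strictly concave with unique maximizer $\xi^C=\mu-\sqrt{C\mu/R}$, and $R\ge C/\mu$ forces $\xi^C\le\xi(0):=\mu-C/R$, the largest value of $\xi$ achievable with a non-negative fee (and a revenue maximizer never charges less). Combining the strict unimodality of $g$ with the ``cap'' at $\xi=\lambda$ yields $\text{Rev}^C_{opt}=g(\min\{\xi^C,\lambda\})$, attained at $\xi=\min\{\xi^C,\lambda\}$.

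For part~(i): $M$ is nondecreasing and equals $1/\mathbb{E}[1/\Lambda]$ on $[0,\lambda_{min}]$, so the hypothesis $\lambda\le 1/\mathbb{E}[1/\Lambda]$ forces $\lambda\le M(\xi)$ for all $\xi$; by Theorem~\ref{thm:Rev_U_P} this gives $\text{Rev}^P(p)\le\text{Rev}^C(p)$ at every price, whence $\text{Rev}^P_{opt}\le\text{Rev}^C_{opt}$. For part~(ii)(a): since $M(\xi)>\xi$ whenever $\xi<\lambda_{max}$, the hypothesis $\lambda\ge M(\xi^C)$ forces $\xi^C<\lambda$, so $\text{Rev}^C_{opt}=g(\xi^C)=\text{Rev}^C(p(\xi^C))$; evaluating $\text{Rev}^P$ at the same price $p(\xi^C)$ and invoking Theorem~\ref{thm:Rev_U_P} (the case $\lambda\ge M(\xi^C)$) gives $\text{Rev}^P(p(\xi^C))\ge\text{Rev}^C(p(\xi^C))=\text{Rev}^C_{opt}$, hence $\text{Rev}^P_{opt}\ge\text{Rev}^C_{opt}$.

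For part~(ii)(b), assume $1/\mathbb{E}[1/\Lambda]<\lambda<M(\xi^C)$. Continuity and strict monotonicity of $M$ on $(\lambda_{min},\lambda_{max})$ give a unique $\xi^{*}\in(\lambda_{min},\lambda_{max})$ with $M(\xi^{*})=\lambda$; since $M(\xi)>\xi$ we get $\xi^{*}<\lambda$, and $\lambda<M(\xi^C)$ gives $\xi^{*}<\xi^C$, so $\xi^{*}<\min\{\xi^C,\lambda\}$. By Theorem~\ref{thm:Rev_U_P}, $\text{Rev}^P(p)\gtrless\text{Rev}^C(p)$ exactly as $\xi(p)\lessgtr\xi^{*}$. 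For prices with $\xi(p)\ge\xi^{*}$ one checks $\text{Rev}^P(p(\xi))<\text{Rev}^C_{opt}$ pointwise: the strictly unimodal map $\xi\mapsto\text{Rev}^C(p(\xi))$ peaks at $\min\{\xi^C,\lambda\}>\xi^{*}$, so it is strictly below $\text{Rev}^C_{opt}$ for all $\xi>\xi^{*}$ except at $\xi=\min\{\xi^C,\lambda\}$, where instead $\text{Rev}^P<\text{Rev}^C$ strictly, while at $\xi=\xi^{*}$ both revenues equal $g(\xi^{*})<\text{Rev}^C_{opt}$. Since $\xi$ ranges over the compact set $[\xi^{*},\xi(0)]$ here, $\sup\{\text{Rev}^P(p):\xi(p)\ge\xi^{*}\}<\text{Rev}^C_{opt}$.

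It remains to bound $\text{Rev}^P(p(\xi))=\lambda\,g(\xi)/M(\xi)$ over the ``private-favorable'' prices $\xi(p)<\xi^{*}$ and show it still stays below $g(\min\{\xi^C,\lambda\})=\text{Rev}^C_{opt}$; this is the main obstacle. Let $\xi^P$ maximize $g/M$ on $[0,\xi(0)]$. Since $g$ decreases and $M$ increases beyond $\xi^C$ we have $\xi^P\le\xi^C$, and since $g/M$ is strictly increasing on $[0,\lambda_{min}]$ (there $M$ is constant and $g$ strictly increasing) we have $\xi^P\ge\lambda_{min}$. If $\xi^P\ge\xi^{*}$, then $M(\xi^P)\ge\lambda$ gives $\lambda\,g(\xi^P)/M(\xi^P)\le g(\xi^P)$, and a short case analysis (using $M(\xi^P)>\xi^P$ when $\xi^P>\lambda$, and strict monotonicity of $g$ up to $\xi^C$) shows this is $<g(\min\{\xi^C,\lambda\})$. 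The remaining case $\xi^P\in(\lambda_{min},\xi^{*})$ is the genuine difficulty: here one works with the first-order condition $g'(\xi^P)M(\xi^P)=g(\xi^P)M'(\xi^P)$ at the interior maximizer, the structural properties of $M$ established in Theorem~\ref{thm:Rev_U_P} (convex, with $M(\xi)>\xi$ for $\xi<\lambda_{max}$, so that $\xi\mapsto\frac{g(\xi^P)}{M(\xi^P)}\,M(\xi)-g(\xi)$ is strictly convex, nonnegative on $[0,\xi(0)]$, and tangent to zero at $\xi^P$), and the hypothesis $\lambda<M(\xi^C)$, to conclude $\lambda\,g(\xi^P)/M(\xi^P)<g(\min\{\xi^C,\lambda\})$. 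Once this high-price estimate is secured, combining it with the previous paragraph gives $\text{Rev}^P_{opt}<\text{Rev}^C_{opt}$, completing part~(ii)(b).
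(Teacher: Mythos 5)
Parts (i) and (ii)(a) of your argument are correct and essentially identical to the paper's proof: (i) follows from pointwise dominance because $\lambda\le 1/\mathbb{E}[1/\Lambda]=M(\lambda_{\min})\le M(\xi)$ for every $\xi$, and (ii)(a) follows by evaluating both revenues at the classical-optimal price $p(\xi^C)$ and invoking Theorem \ref{thm:Rev_U_P}.

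Part (ii)(b), however, contains a genuine gap: you explicitly leave the case $\xi^P\in(\lambda_{\min},\xi^{*})$ as ``the genuine difficulty'' and only list ingredients (the first-order condition, convexity of $M$) without deriving the needed bound $\lambda\,g(\xi^P)/M(\xi^P)<g(\min\{\xi^C,\lambda\})$. The tangency construction you sketch, $\xi\mapsto \frac{g(\xi^P)}{M(\xi^P)}M(\xi)-g(\xi)\ge 0$, only re-establishes that $\xi^P$ maximizes $g/M$ and produces a \emph{lower} bound on $g(\xi^P)/M(\xi^P)$ in terms of other points, which is the wrong direction. More importantly, the gap cannot be closed, because claim (ii)(b) is false. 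Take an M/M/1 queue with $\mu=1$, $R=4$, $C=1$, $\lambda=0.5$, and a two-point belief $\mathbb{P}(\Lambda=0.1)=0.9$, $\mathbb{P}(\Lambda=0.9)=0.1$. Then $1/\mathbb{E}[1/\Lambda]\approx 0.110<\lambda$, $\xi^C=0.5$, and $M(\xi^C)=\left(0.9/0.5+0.1/0.9\right)^{-1}\approx 0.523>\lambda$, so the hypotheses of (ii)(b) hold. The classical optimum is attained at the fee $p=2$ inducing $\xi=0.5$, giving $\text{Rev}^C_{opt}=2\cdot 0.5=1$. But at the fee $p=2.889$ inducing $\xi=0.1=\lambda_{\min}$, one has $\mathbb{E}[Q]=0.9+0.1\cdot(0.1/0.9)\approx 0.911$ and $\text{Rev}^P=2.889\cdot 0.5\cdot 0.911\approx 1.316>1=\text{Rev}^C_{opt}$. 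Your own preliminary computation already exposes the source of the trouble: you correctly deduce $\xi^P\le\xi^C$ (since $g$ decreases and $M$ increases past $\xi^C$), whereas the paper's proof rests on the opposite claim $\xi^P\ge\xi^C$, which it obtains from an incorrect expression for $(\text{Rev}^P)'$ on $(\lambda_{\min},\lambda_{\max})$; the correct derivative is $\lambda\bigl[p'(\xi)\int_{\lambda_{\min}}^{\xi}f\,d\Lambda+g'(\xi)\int_{\xi}^{\lambda_{\max}}(f/\Lambda)\,d\Lambda\bigr]$, which is strictly negative at $\xi^C$ whenever the belief distribution puts mass below $\xi^C$. The maximizer of $\text{Rev}^P$ therefore sits in the high-price region where $\lambda>M(\xi)$, and there the private case beats the classical case by the factor $\lambda/M(\xi)$, which can more than compensate for pricing away from $\xi^C$. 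In short, (i) and (ii)(a) stand, but (ii)(b) is both unproven in your attempt and unprovable as stated.
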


\begin{proof}
    (i) This is a direct consequence of Theorem \ref{thm:Rev_U_P}.

    (ii) (a) We have
    $$(\text{Rev}^{C})'(\xi) = \begin{cases}
        R - \frac{C\mu}{(\mu-\xi)^2} & \lambda\geq \xi, \\
        -\frac{C\lambda}{(\mu - \xi)^2} & \lambda < \xi.
        
    \end{cases} $$
    Let $(\text{Rev}^C)'(\xi) = 0$, we get that the maximizer of $\text{Rev}^C(\xi)$ is $\xi^C = \min {\left\{\mu - \sqrt{\frac{C\mu}{R}},\lambda\right\}}$.

    Since \(\xi^C\leq \lambda\leq \lambda_{max}\), according to Theorem \ref{thm:Rev_U_P}, if $M^{-1}(\lambda) \geq \xi^C$, i.e., $\lambda \geq M(\xi^C)$, we have $\text{Rev}^P(\xi(p))\geq \text{Rev}^C(\xi(p))$ for any $\xi$. Thus, $$\max_{\xi}\text{Rev}^P(\xi) \geq \text{Rev}^P(\xi^C)\geq \text{Rev}^C(\xi^C) = \max_{\xi}\text{Rev}^C(\xi).$$
   This is equivalent to  $\text{Rev}^P_{opt} \geq \text{Rev}^C_{opt}$.
   
(b)     Denote the maximizer of $\text{Rev}^P(\xi)$ with $\xi^{P}$, then
$$[\text{Rev}^P(\xi)] = \begin{cases}
    \lambda \mathbb{E}[1/\Lambda]\left[R - \frac{C\mu}{(\mu - \xi)^2}\right] & \xi\leq \lambda_{min},\\
    (2R - \frac{C}{\mu -\xi}- \frac{C\mu}{(\mu-\xi)^2})\lambda\int_{\lambda_{min}}^{\xi} \dfrac{f(\Lambda)}{\xi}d\Lambda+\lambda \int^{\lambda_{max}}_{\xi} \dfrac{f(\Lambda)}{\Lambda}d\Lambda & \lambda_{min}<\xi \leq \lambda_{max},\\
    -\frac{C\lambda}{(\mu - \xi)^2}& \xi >\lambda_{max}.\\
\end{cases}$$

Now we want to show that $\xi^P\geq\xi^C$. If \(\xi^C \leq \lambda_{\min}\), then 
\[
\xi^C = \mu - \sqrt{\tfrac{C\mu}{R}}, 
\quad \text{and} \quad 
(\text{Rev}^P)'(\xi^C) = 0.
\]
If instead \(\xi^C > \lambda_{\min}\), consider two cases. First, when \(\xi^C = \mu - \sqrt{\tfrac{C\mu}{R}}\), that is, when 
\(\mu - \sqrt{\tfrac{C\mu}{R}} \leq \lambda\), we obtain
\[
(\text{Rev}^P)'(\xi^C) = \Bigl(R - \sqrt{\tfrac{CR}{\mu}}\Bigr)\lambda 
\int_{\lambda_{\min}}^{\xi^C} \frac{f(\Lambda)}{\xi^C}\, d\Lambda
+ \lambda \int_{\xi^C}^{\lambda_{\max}} \frac{f(\Lambda)}{\Lambda}\, d\Lambda.
\]
Since \(R \geq \tfrac{C}{\mu}\), it follows that 
\((\text{Rev}^P)'(\xi^C) > 0\), which is also true when \(\Lambda\) follows a discrete distribution.
Second, when \(\xi^C = \lambda\), which occurs if 
\(\lambda < \mu - \sqrt{\tfrac{C\mu}{R}}\), we have
\[
2R - \frac{C}{\mu - \lambda} - \frac{C\mu}{(\mu - \lambda)^2} 
\;\;\geq\;\; R - \sqrt{\tfrac{CR}{\mu}} 
\;\;\geq\; 0.
\]
This again implies that \([\text{Rev}^P(\xi^C)]' > 0\).
Therefore, in all cases it holds that
\[
(\text{Rev}^P)'(\xi^C) \geq 0,
\]
which indicates that \(\xi^P \geq \xi^C\).


     Since $\lambda < M(\xi^C)< M(\xi^P)$, we have $\text{Rev}^P(\xi^P)<\text{Rev}^C(\xi^P)$  according to Theorem \ref{thm:Rev_U_P}. Thus,
     $$\max_{\xi}\text{Rev}^C(\xi) > \text{Rev}^C(\xi^P)> \text{Rev}^P(\xi^P) = \max_{\xi}\text{Rev}^P(\xi).$$
      This is equivalent to $\text{Rev}^C_{opt} > \text{Rev}^P_{opt}$.
\end{proof}

For the other two pairs of comparison, it is difficult to derive a clean expression to distinguish the dominant information level. Proposition \ref{thm:Rev_U_S} tells us that when \(\lambda \leq \mathbb{E}[\Lambda]\), we have that $\text{Rev}^C_{opt} \geq \text{Rev}^S_{opt}$. As a result, the optimal revenue is always higher in the classical case than in the shared belief and the private belief case. However, when \(\lambda > \mathbb{E}[\Lambda]\), we are not sure about the relationship.

\subsection{Impact on Social Welfare} \label{Chap: ImpactSW}

The expression for social welfare in the private belief case \eqref{SWPp} has a complex structure, making it difficult to compare under a fixed \(p\). However, in the social optimizer's problem, it is easier to directly compare the optimal social welfare across different information levels.

\begin{theorem} \label{SW_opt}
In an M/M/1 queue, The optimal social welfare for the three classes satisfies
\[
\text{SW}_{opt}^P \leq \text{SW}_{opt}^C = \text{SW}_{opt}^S,
\]
in either of the following cases:
\begin{enumerate}
    \item[(i)] $\xi(0) \geq \bar{\lambda}$.
    \item[(ii)] $\xi(0) <\bar{\lambda}$ and the belief distribution $\Lambda$ has support $[\lambda_{\min}, \lambda_{\max}]$ satisfying $\lambda_{\max} \leq (1+\epsilon)\lambda$, where \(\epsilon  = \sqrt{\frac{C}{R\mu}}\).
\end{enumerate}

\end{theorem}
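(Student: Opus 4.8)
The plan is to collapse all three welfare functionals into one concave function of a single probability. For the M/M/1 queue write $W(x)=1/(\mu-x)$ and set $g(q):=q\lambda\bigl(R-CW(q\lambda)\bigr)$, so that $g=\text{SW}^C$ as a function of the joining probability; by \eqref{SWSp} and \eqref{SWPp} one also has $\text{SW}^S(p)=g\bigl(q^S(p)\bigr)$ and $\text{SW}^P(p)=\mathbb{E}\bigl[g(Q(p))\bigr]$, with $q^S(p),Q(p)\in[0,1]$ whenever $\xi(p)\ge 0$ (for $p>R-C/\mu$ no customer has non-negative net benefit, so all three welfares vanish and those $p$ cannot affect the maxima). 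The first step is to identify $\text{SW}_{opt}^C$ with the unconstrained maximum of $g$ on $[0,1]$: $g$ is strictly concave (Appendix~\ref{Sec:Property}), and differentiating $\tilde g(x):=x\bigl(R-CW(x)\bigr)$ shows its maximizer over $x\ge 0$ is $\xi^C:=\mu-\sqrt{C\mu/R}\ge 0$ (non-negativity uses $R\ge C/\mu$), so the maximizer of $g$ over $[0,1]$ is $q^*:=\min\{\xi^C/\lambda,1\}$. I would then check that the non-negative-fee constraint does not bind in the classical case, since the fee $R-\sqrt{CR/\mu}$ induces $q^*$ when $q^*<1$ and is non-negative exactly because $R\ge C/\mu$, while if $q^*=1$ then $\xi^C\ge\lambda$ forces $\xi(0)=\mu-C/R\ge\xi^C\ge\lambda$, so $p=0$ already gives $q^C(0)=1$. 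This yields $\text{SW}_{opt}^C=g(q^*)=\max_{q\in[0,1]}g(q)$.

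With that in hand, the inequality $\text{SW}_{opt}^P\le\text{SW}_{opt}^C$ is immediate and needs no hypothesis: for every admissible $p$ we have $g\bigl(Q(p)\bigr)\le g(q^*)$ pointwise since $Q(p)\in[0,1]$, hence $\text{SW}^P(p)=\mathbb{E}\bigl[g(Q(p))\bigr]\le g(q^*)=\text{SW}_{opt}^C$; maximizing over $p$ finishes it. The same pointwise bound applied to $q^S(p)$ gives $\text{SW}_{opt}^S\le\text{SW}_{opt}^C$.

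It then remains to prove $\text{SW}_{opt}^S\ge\text{SW}_{opt}^C$, i.e.\ that the shared-belief social optimizer can induce $q^*$ with a non-negative fee. By \eqref{u^s} the fee $p':=R-C\,\mathbb{E}[W(q^*\Lambda)]$ makes $\text{U}^S(q^*)=0$, hence induces equilibrium $q^S(p')=q^*$ and $\text{SW}^S(p')=g(q^*)$; as $p'<R$ automatically, the only thing to verify is $p'\ge 0$, i.e.\ $\mathbb{E}[W(q^*\Lambda)]\le R/C$. Under case (i), $\xi(0)\ge\bar\lambda$ is exactly $\mathbb{E}[W(\Lambda)]\le R/C$ (this is what \eqref{q^S} and the definition of $\bar\lambda$ say), and since $q^*\le 1$ and $W$ is increasing, $\mathbb{E}[W(q^*\Lambda)]\le\mathbb{E}[W(\Lambda)]\le R/C$. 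Under case (ii), I would first record the key algebraic identity $\xi^C(1+\epsilon)=\mu(1-\epsilon^2)$ — which holds because $\xi^C=\mu-\sqrt{C\mu/R}=\mu(1-\epsilon)$ with $\epsilon=\sqrt{C/(R\mu)}$ — so that $W\bigl(\xi^C(1+\epsilon)\bigr)=\tfrac{1}{\mu\epsilon^2}=\tfrac{R}{C}$. This rules out $q^*=1$ under (ii) (it would give $\xi^C\ge\lambda$, hence $\lambda_{\max}\le(1+\epsilon)\lambda\le(1+\epsilon)\xi^C$ and $\mathbb{E}[W(\Lambda)]\le W(\lambda_{\max})\le W\bigl(\xi^C(1+\epsilon)\bigr)=R/C$, i.e.\ $\xi(0)\ge\bar\lambda$, contradicting the hypothesis), so $q^*=\xi^C/\lambda$ and $q^*\lambda_{\max}=\xi^C\lambda_{\max}/\lambda\le\xi^C(1+\epsilon)$; monotonicity of $W$ then gives $\mathbb{E}[W(q^*\Lambda)]\le W\bigl(q^*\lambda_{\max}\bigr)\le W\bigl(\xi^C(1+\epsilon)\bigr)=R/C$. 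In either case $p'\ge 0$, so $\text{SW}_{opt}^S\ge g(q^*)=\text{SW}_{opt}^C$, and combining with the previous paragraph yields $\text{SW}_{opt}^P\le\text{SW}_{opt}^C=\text{SW}_{opt}^S$.

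The main obstacle is the interplay between the non-negative-fee constraint and achievability of the welfare-optimal joining probability $q^*$ in each regime: one must verify that $q^*$ lies in the set of probabilities inducible by some $p\ge 0$, which for the classical case rests on $R\ge C/\mu$ and for the shared-belief case is precisely what hypotheses (i) and (ii) are designed to guarantee. Within (ii) the crux is recognizing the clean identities $\xi^C=\mu(1-\epsilon)$ and $W\bigl(\mu(1-\epsilon^2)\bigr)=R/C$ — the reason the constant $\epsilon=\sqrt{C/(R\mu)}$ enters the statement — after which everything else is routine, modulo the mild boundary cases $q^*=1$ and $R\mu=C$ (in the latter $\xi^C=0$ and all three optimal welfares equal $0$).
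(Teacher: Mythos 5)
Your proof is correct, and it follows the same skeleton as the paper's: collapse all three welfare rates into the single strictly concave function $g(q)=q\lambda\bigl(R-CW(q\lambda)\bigr)$, and show that hypotheses (i) and (ii) are exactly what is needed for the welfare-maximizing probability $q^*=\min\{\xi^C/\lambda,1\}$ to be inducible with a non-negative fee; your case (ii) computation ($\xi^C=\mu(1-\epsilon)$, $W\bigl(\xi^C(1+\epsilon)\bigr)=R/C$, hence $\mathbb{E}[W(q^*\Lambda)]\le R/C$) is literally the paper's verification that $\mathrm{U}^S(q^*)\ge 0$ at $p=0$. The differences are in the plumbing, and yours is tidier in two respects. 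First, for $\text{SW}^P$ the paper applies Jensen's inequality to get $\text{SW}^P(p)\le g\bigl(\mathbb{E}[Q(p)]\bigr)$ and then compares the range of achievable $\mathbb{E}[Q(p)]$ with that of $q^S(p)$; you instead use the pointwise bound $g\bigl(Q(p)\bigr)\le\max_{[0,1]}g$, which is more elementary and shows that $\text{SW}^P_{opt}\le\text{SW}^C_{opt}$ holds with no hypothesis at all. Second, the paper asserts $\text{SW}^S_{opt}=\text{SW}^C_{opt}$ up front from the identity of the $q$-formulas and then spends conditions (i)/(ii) on the $P$-versus-$S$ comparison, glossing over the fact that the non-negative-fee constraint can shrink the set of joining probabilities the shared-belief optimizer can induce (to $[0,q^S(0)]$, which may exclude $q^*$); you route the hypotheses precisely into establishing $q^*\le q^S(0)$, i.e.\ $p'=R-C\,\mathbb{E}[W(q^*\Lambda)]\ge 0$, which is where they are actually needed, and you also explicitly check the analogous (always satisfied, via $R\ge C/\mu$) constraint in the classical case. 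So your reorganization both simplifies the $P$-bound and makes the logical role of conditions (i) and (ii) more transparent.
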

\begin{proof}
(i) From \eqref{SWSq}, we have $\text{SW}^S_{opt} = \text{SW}^C_{opt}$. Now we want to show that $\text{SW}_{opt}^P \leq \text{SW}_{opt}^S$. 

Apply Jensen's inequality to \eqref{SWPp}. Since $\text{SW}^P(p)$ is concave,
\[
\text{SW}^P(p) \leq \lambda \mathbb{E}[Q(p)] \left[ R - \frac{C}{\mu - \lambda \mathbb{E}[Q(p)]} \right].
\]
Recall that the social welfare when customers have shared beliefs is
\[
\text{SW}^S(p) = \lambda q^S(p) \left[ R - \frac{C}{\mu - \lambda q^S(p)} \right].
\]

Next, we compare the ranges of support in the two cases. The maximum equilibrium joining probability in the shared belief case is achieved when no fee is charged:
\begin{align*}
    \max_p q^S(p) = q^S(0) = \begin{cases}
\tilde{q}^S(0) & \xi(0) \leq \bar{\lambda}, \\
1 & \xi(0) > \bar{\lambda}.
\end{cases}
\end{align*}
Similarly, in the private belief case, the maximum equilibrium joining probability is given by:
\begin{align*}
  \max_p{\mathbb{E}[Q(p)]} = \mathbb{E}[Q(0)] = \begin{cases}
        1   & \text{if }\xi(0)\geq \lambda_{max} , \\
       \int_{\lambda_{min}}^{\xi(0)} f(\Lambda)d\Lambda +\int^{\lambda_{max}}_{\xi(0)}\xi(0)\dfrac{1}{\Lambda} f(\Lambda)d\Lambda  & \text{if } \lambda_{min}<\xi(0)< \lambda_{max} ,\\
      \xi(0) \mathbb{E}[1/\Lambda] & \text{if } \xi(0) \leq  \lambda_{min}.
   \end{cases} 
\end{align*}

If $\xi(0)\geq \bar{\lambda}$, then $q^S(p)\in[0,1]$ and $\mathbb{E}[Q(p)]\in[0,\mathbb{E}[Q(0)]]$ for all $p$, which is also true when \(\Lambda\) follows a discrete distribution. Moreover, the support of $\mathbb{E}[Q(p)]$ is contained in the support of $q^S(p)$. Thus, the maximum of $\text{SW}^S(p)$ is no less than the upper bound of $\text{SW}^P(p)$, i.e.,
\[
\max_{q^S(p)\in [0,1]} \lambda q^S(p)\left[ R - \frac{C}{\mu - \lambda q^S(p)} \right] 
\;\geq\; 
\max_{\tilde{q}(p)\in [0,\mathbb{E}[Q(0)]] } \lambda \mathbb{E}[Q(p)] \left[ R - \frac{C}{\mu - \lambda \mathbb{E}[Q(p)] } \right].
\]
It follows that
\[
\text{SW}^P_{opt} \leq \text{SW}^S_{opt  }.
\]

(ii) If \(\xi(0)< \bar{\lambda}\), the maximum of \(q^S(p)\) satisfies the following equation:
\begin{align*}
    q^S(0) = R-C\mathbb{E}\left[\frac{1}{\mu-q^S(0)\Lambda}\right] = 0.
\end{align*}
Apply Jensen Inequality, we have
\begin{align*}
    \frac{R}{C} = \mathbb{E}\left[\frac{1}{\mu-q^S(0)\Lambda}\right]\geq \frac{\mathbb{E}\left[\frac{1}{\Lambda}\right]}{\mu\mathbb{E}\left[\frac{1}{\Lambda}\right]-q^S(0)},
\end{align*}
which implies that 
\[q^S(0) \leq \xi(0) \mathbb{E}[1/\Lambda]\leq \mathbb{E}[Q(0)].\]
Thus, the support of $q^S(p)$ is contained in the support of $\mathbb{E}[Q(p)]$.

Recall that the optimal of the upper bound of $\text{SW}^P(p)$ is achieved when \(\mathbb{E}[Q(p)] = (\mu - \sqrt{\frac{C\mu}{R}})\frac{1}{\lambda}:= q^*\). We want to show that \(q^*\) is contained within \([0,q^S(0)]\), such that 
\[
\max_{q^S(p)\in [0,\tilde{q}^S(0)]} \lambda q^s(p)\left[ R - \frac{C}{\mu - \lambda q^S(p)} \right] = \max_{\mathbb{E}[Q(p)]\in [0,\mathbb{E}[Q(0)]] } \lambda \tilde{q}(p) \left[ R - \frac{C}{\mu - \lambda \mathbb{E}[Q(p)] } \right].
\]
Hence, \[
\text{SW}^P_{opt} \leq \text{SW}^S_{opt  }.
\]

When \(p = 0\), we have
\[\text{U}^S(q^*) = R - \mathbb{E}\left[\frac{C}{\mu - q^*\Lambda}\right]\geq R - \frac{C}{\mu - q^*\lambda_{max}} \geq R  - \frac{C}{\mu - \left(\mu - \sqrt{\frac{C\mu}{R}}\right)(1+\epsilon)} = 0, \] 
where the last inequality is due to the condition that \(\lambda_{max}\leq (1+\epsilon)\lambda\).
Since \(U^S(q^S(0)) = 0 \leq  \text{U}^S(q^*) \) and that \(\text{U}^S(q)\) is monotonously decreasing in terms of \(q\), we have \(q^*\leq q^S(0)\), which completes the proof.

\end{proof}

The optimal social welfare in the classical and shared belief cases is always the same. 
The condition $\xi(0) \geq \bar{\lambda}$ is equivalent to $\mathbb{E}[W(\Lambda)] \leq \tfrac{R}{C}$, 
indicating that customers expect a low arrival rate, i.e., optimistic about the waiting time. In this setting, the optimal social welfare under private beliefs is strictly lower 
than that under classical and shared beliefs. 
The same conclusion holds when $\xi(0) < \bar{\lambda}$, i.e., when $\mathbb{E}[W(\Lambda)] > \tfrac{R}{C}$ and customers are pessimistic, 
provided that the spread of beliefs on the right-hand side is sufficiently limited.


\section{Managerial Insight on Information Revelation} \label{Sec: Insights}

In this section, we discuss the strategies managers can take to maximize revenue or optimize social welfare by strategically revealing information and pricing the service appropriately.
The amount of information increases in the order of private belief, shared belief, and classical belief case. The manager can choose to release the customer belief distribution or the true arrival rate, however, this process is not reversible. We propose the general steps managers can follow in an M/M/1 queue to evaluate whether revealing information is beneficial.

\textbf{Strategy for RM}: It involves more computation for the RM to find the optimal strategy. The RM's best strategy depends on the belief distribution, the true arrival rate and the system parameters.

\begin{itemize}
    \item \(\lambda\leq 1/\mathbb{E}[1/\Lambda]\): This is when customers are  \textbf{pessimistic} about the arrival rate. The RM should reveal the true arrival rate and then optimize over \(p\).
    \item \( 1/\mathbb{E}[1/\Lambda]<\lambda\leq\mathbb{E}[\Lambda]\): This is when customers are \textbf{neutral} about the arrival rate. The RM should compute for \(M(\xi^C)\) and compare it with \(\lambda\). If \(\lambda < M(\xi^C)\), then release the true arrival rate; if not, do not release any information.
    \item \( \lambda>\mathbb{E}[\Lambda]\): This is when customers are \textbf{optimistic} about the arrival rate. In general, it is better for the RM to keep the customer belief private when planning to charge a high price and release some information when planning to charge a low price.
    
\end{itemize}

\textbf{Strategy for SO}: The strategy for the SO is simpler and more straightforward. To achieve optimal social welfare, the SO should release the customers' belief distribution or the true arrival rate when the customers are optimistic. In cases where the customers are pessimistic, as long as all customers expect an arrival rate no higher than a certain range, the manager should still reveal the information; otherwise, the manager should be cautious about releasing it.

\section{Numerical Results}\label{Sec: Numerics}

In this section, we perform a series of numerical experiments to verify and give a more intuitive explanation to our analysis. 

\subsection{Illustration 1: Equilibria}

We verify the results in section 4.1 with customer belief distribution following a two-point discrete distribution with $\mathbf{P(\Lambda=2.2)=0.5}$ and $\mathbf{P(\Lambda=3.8)=0.5}$. The system parameters are set to \( \mathbf{R} = \mathbf{5} \), \( \mathbf{C} = \mathbf{4} \), \( \boldsymbol{\mu} = \mathbf{4} \) and \( \boldsymbol{\lambda} = \mathbf{3} \). In Figure \ref{equilibria}, the x-axis of the intersection point of the curves with the \(R/C\) line indicates the root of the derivative of equation \eqref{Rev^Sq},\eqref{Rev^Uq}, \eqref{u^s} and \eqref{u^u} from left to right correspondingly. 

\begin{figure}[H]
    \centering
    \includegraphics[width=0.8\textwidth]{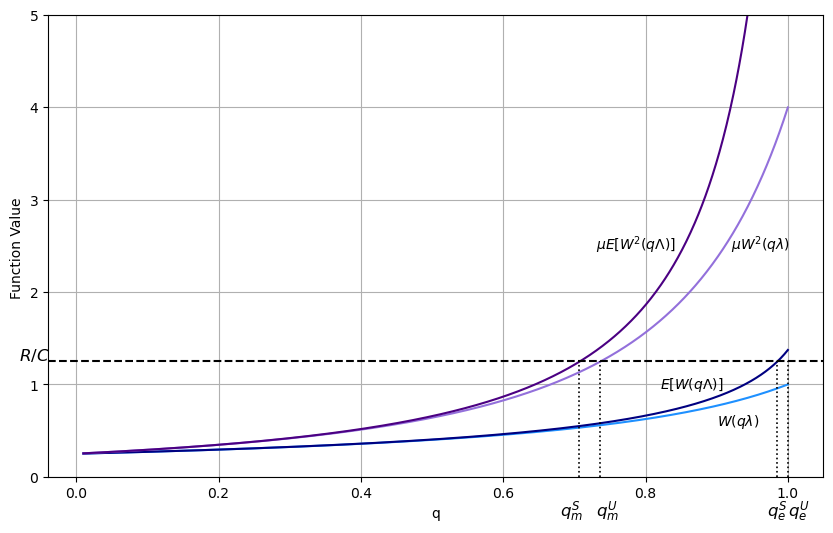}
    \caption{Comparison of desirable equilibrium for customer, RM and SO. The x-axis of the intersection point of the curves with the \(R/C\) line indicates the corresponding equilibria. }
    \label{equilibria}
\end{figure}

The order of these intersection points shows that \(q_m^S < q_m^C = q_s^C = q_s^S< q_e^S < q_e^C = 1\), which aligns with our analysis.

\subsection{Illustration 2: Revenue}
We present a few experiments to show how the mean and variance of belief distribution impact revenue and interpret the results with the threshold  Figure \ref{RevCompare}. In this subsection, we fix system parameter to  \( \mathbf{R} = \mathbf{5} \), \( \mathbf{C} = \mathbf{5} \), \( \boldsymbol{\mu} = \mathbf{5} \) and \( \boldsymbol{\lambda} = \mathbf{4.2} \) and the customer belief distribution \(\Lambda\) follows the uniform distribution.

\textbf{Experiment 1}
Keep the service fee \( p \) and the variance of the belief distribution \( \Lambda \) fixed. By gradually changing the mean of the uniform distribution, we obtain the results shown in Table \ref{num:mean}. Columns 2-4 indicate the potential threshold values at which a switch in the relationship could occur, corresponding to each belief distribution.

\begin{table}[h]
    \centering
    \begin{tabular}{c|c|c c c|c c c}
        \toprule
        & $\Lambda$ & $1/\mathbb{E}[1/\Lambda]$ & $\mathbb{E}[\Lambda]$ &  $M(\xi)$ & $\text{Rev}^P$& $\text{Rev}^S$ &  $\text{Rev}^C$\\

        \hline
1  & \text{U}(3.4, 4.0) & 3.692 & 3.7 & 3.719 & 6.05 & 6.048 & 5.357 \\
2  & \text{U}(3.5, 4.1) & 3.792 & 3.8 & 3.797 & 5.926 & 5.891 & 5.357 \\
3  & \text{U}(3.6, 4.2) & 3.892 & 3.9 & 3.892 & 5.781 & 5.741 & 5.357 \\
4  & \text{U}(3.7, 4.3) & 3.992 & 4.0 & 3.992 & 5.636 & 5.599 & 5.357 \\
5  & \text{U}(3.8, 4.4) & 4.093 & 4.1 & 4.093 & 5.498 & \cellcolor{violet!40}5.464 & 5.357 \\
6  & \text{U}(3.9, 4.5) & 4.193 & 4.2 & 4.193 &\cellcolor{red!40}5.366 & \cellcolor{violet!40}5.335 & 5.357 \\
7  & \text{U}(4.0, 4.6) & 4.293 & 4.3 & 4.293 &\cellcolor{red!40}5.241 & 5.212 & 5.357 \\
8  & \text{U}(4.1, 4.7) & 4.393 & 4.4 & 4.393 & 5.122 & 5.094 & 5.357 \\
9  & \text{U}(4.2, 4.8) & 4.493 & 4.5 & 4.493 & 5.007 & 4.982 & 5.357 \\
10 & \text{U}(4.3, 4.9) & 4.593 & 4.6 & 4.593 & 4.898 & 4.874 & 5.357 \\
11 & \text{U}(4.4, 5.0) & 4.694 & 4.7 & 4.694 & 4.794 & 4.771 & 5.357 \\

        \bottomrule
    \end{tabular}
    \caption{Revenues under changing mean of belief distribution \(\Lambda\) when \(p = 1.5\).}
    \label{num:mean}
\end{table}

While the revenue in the classical case remains the same, revenue in both the private and shared belief case shows a decreasing trend when the belief distribution shifts to the right. In this parameter setting, the threshold arrival rate is \(\xi = 3.571\). Comparing the private belief case and the classical case, we notice that the switch in the relationship (highlighted in red) happens when \(M(\xi)\) goes from 4.193 to 4.293, where there is a switch from  \(M(\xi)<\lambda\) to \(M(\xi)>\lambda\). This observation demonstrates the threshold indicated in Theorem \ref{thm:Rev_U_P}. Comparing the shared belief case with the classical case, we see a switch (highlighted in purple) when the arrival rate belief changes from $\Lambda \sim U(3.8, 4.4)$ to $\Lambda \sim U(3.9, 4.5)$. Correspondingly, the expectation shifts from $\mathbb{E}[\Lambda] = 4.2 = \lambda$ to $\mathbb{E}[\Lambda] = 4.1 < \lambda$, while it remains that $\bar{\lambda}  > \xi$. This result aligns with Theorem \ref{thm:Rev_U_S}, however, the theorem itself only guarantees that the switch will not happen for distribution with expected value higher than \(\lambda\), and happen indefinitely for expected value lower than \(\lambda\).

\textbf{Experiment 2} The spread of the uniform belief distribution reflects how variant are the customers' belief about the arrival rate. We control the mean of the belief distribution at three different levels and fix service fee \(
p\), gradually changing the spread of the distribution at each level. 

\begin{table}[H]
    \centering
    \begin{tabular}{c|c|c| c |c c c}
        \toprule
       & Level & $\Lambda$  & $\mathbb{E}[\Lambda]$ &  $\text{Rev}^P$ &  $\text{Rev}^S$ &  $\text{Rev}^C$\\

        \hline
        1  & \multirow{5}{*}{Unbiased}    & 4.2 & 4.2 & 5.357 & 5.357 & 5.357 \\
        2  &                              & \( \text{U}(4.1, 4.3) \) & 4.2 & 5.358 & 5.355 & 5.357 \\
        3  &                              & \( \text{U}(4.0, 4.4) \) & 4.2 & 5.361 & 5.347 & 5.357 \\
        4  &                              & \( \text{U}(3.9, 4.5) \) & 4.2 & 5.366 & 5.335 & 5.357 \\
        5  &                              & \( \text{U}(3.8, 4.6) \) & 4.2 & 5.373 & 5.317 & 5.357 \\
        \hline
        6  & \multirow{5}{*}{Optimistic}  & 3.8 & 3.8 & 5.921 & 5.921 &5.357\\
        7  &                              & \( \text{U}(3.7, 3.9) \)  & 3.8 & 5.922 & 5.918 & 5.357 \\
        8  &                              & \( \text{U}(3.6, 4.0) \)  & 3.8 & 5.927 & 5.907 & 5.357 \\
        9  &                              & \( \text{U}(3.5, 4.1) \) & 3.8 & 5.926 & 5.891 & 5.357 \\
        10  &                              & \( \text{U}(3.4, 4.2) \)  & 3.8 & 5.910 & 5.868 & 5.357 \\
        \hline
        11  & \multirow{5}{*}{Pessimistic} & 4.6 & 4.6 & 4.891 & 4.891 &5.357\\
        12  &                              &\( \text{U}(4.5, 4.7) \) & 4.6 & 4.892 & 4.889 & 5.357 \\
        13 &                              & \( \text{U}(4.4, 4.8) \)  & 4.6 & 4.894 & 4.884 & 5.357 \\
        14 &                              & \( \text{U}(4.3, 4.9) \) & 4.6 & 4.898 & 4.874 & 5.357 \\
        15 &                              & \( \text{U}(4.2, 5.0) \)  & 4.6 & 4.904 & 4.861 & 5.357 \\
        \bottomrule
    \end{tabular}
    \caption{Revenues under changing spread of belief distribution \(\Lambda\) when \(p = 1.5\) and the distribution is unbiased, optimistic or pessimistic.}
    \label{num:spread}
\end{table}
For the shared belief case, the results in Table \ref{num:spread} show that increasing spread harms revenue, as customers expect longer waiting times when they learn the varied belief. 

For the private belief case, increasing spread boosts revenue when customers are unbiased or pessimistic. However, when customers are optimistic, revenue starts decreasing when the belief is too spread out. The reason behind this is that the threshold joining probability when \( p = 1.5 \) is \( \xi = 3.571 \). More customers who believe in a lower arrival rate than this will not increase their joining probability, while more customers who believe in a higher arrival rate will always lower their joining probability.

\textbf{Experiment 3} In this experiment, we show how setting different service fee \(p\) affects revenue when the customer belief distribution is given. We also demonstrate how optimistic and pessimistic customers can affect optimal strategy.

\begin{table}[H]
    \centering
    \begin{minipage}{0.45\textwidth} 
        \centering
        \resizebox{\textwidth}{!}{
            \begin{tabular}{c|c|c|c|c|c}
                \toprule
                & $p$ & $\xi$ & $\text{Rev}^P$ & $\text{Rev}^S$ &  $\text{Rev}^C$ \\
                \hline
                1 & 0.1  & 3.98   & 0.420  & 0.42  & 0.398 \\
                2 & 0.5  & 3.889  & 2.092  & 2.1   & 1.944 \\
                3 & 0.9  & 3.78   & \cellcolor{blue!50}3.722 & \cellcolor{blue!50}3.75  & 3.402 \\
                4 & 1.3  & 3.649  & \cellcolor{blue!50}5.243 & \cellcolor{blue!50}5.23  & 4.743 \\
                5 & 1.7  & 3.485  & 6.554  & 6.534 & 5.924 \\
                6 & 2.1  & 3.276  & 7.61   & 7.59  & 6.879 \\
                7 & 2.5  & 3.0    & 8.297  & 8.278 & 7.5   \\
                8 & 2.9  & 2.619  & 8.403 & 8.386 & 7.595 \\
                9 & 3.3  & 2.059  & 7.516  & 7.504 & 6.794 \\
                10 & 3.7  & 1.154  & 4.723  & 4.717 & 4.269 \\
                \bottomrule
            \end{tabular}
        }
        \caption{Revenues under changing \(p\) when customers are optimistic with belief distribution $\Lambda\sim$U(3.6,4.0).}
        \label{num:p1}
    \end{minipage}%
    \hspace{0.05\textwidth} 
    \begin{minipage}{0.45\textwidth} 
        \centering
        \resizebox{\textwidth}{!}{
            \begin{tabular}{c|c|c|c|c|c}
                \toprule
                & $p$ & $\xi$ & $\text{Rev}^P$ & $\text{Rev}^S$ &  $\text{Rev}^C$ \\
                \hline
        1  & 0.1  & 3.98  & 0.364  & 0.362  & 0.398  \\
        2  & 0.5  & 3.889  & 1.776  & 1.771  & 1.944  \\
        3  & 0.9  & 3.78  & 3.109  & 3.101  & 3.402  \\
        4  & 1.3  & 3.649  & 4.334  & 4.323  & 4.743  \\
        5  & 1.7  & 3.485  & 5.413  & 5.401  & 5.924  \\
        6  & 2.1  & 3.276 & 6.285  & 6.274  & 6.879  \\
        7  & 2.5  & 3.0    & 6.852  & 6.841  & 7.5    \\
        8  & 2.9  & 2.619  & 6.939  & 6.930  & 7.595  \\
        9  & 3.3  & 2.059  & 6.207  & 6.201  & 6.794  \\
        10 & 3.7  & 1.154  & 3.900  & 3.897  & 4.269  \\
                \bottomrule
            \end{tabular}
        }
        \caption{Revenues under changing \(p\) when customers are pessimistic with belief distribution $\Lambda\sim$U(4.4,4.8).}
        \label{num:p2}
    \end{minipage}
\end{table}
Comparing the private belief case and the shared belief case in Table \ref{num:p1}, we observe that the switch in the relationship (highlighted in blue) occurs when \(\xi\) shifts from 3.78 to 3.649, aligning with Theorem \ref{thm:Rev_S_P}. In this experiment, with $\lambda_{\min} = 3.6$ and $\bar{\lambda} = 3.833$, we can also deduce that the switching point $\xi_0 \in [\lambda_{\min}, \bar{\lambda}]$.

For optimistic customers in Table \ref{num:p1}, since \(1/\mathbb{E}[1/\Lambda] = 3.796 < \lambda\), Theorem \ref{thm:Rev_opt_P_U} indicates that \(\text{Rev}^P_{opt} > \text{Rev}^C_{opt}\), which is consistent with the numerical results. The optimal strategy for the RM against optimistic customers is to keep their belief private and set the service fee \(p\) at 2.764, yielding a revenue of 8.451.  For pessimistic customers in Table \ref{num:p1}, since \(1/\mathbb{E}[1/\Lambda] = 4.597 < \lambda\) and \(M(\xi^C) = 4.597 <\lambda \), Theorem \ref{thm:Rev_opt_P_U} indicates that \(\text{Rev}^P_{opt} < \text{Rev}^C_{opt}\), which also aligns with the numerical results. The optimal strategy for the RM against pessimistic customers is to disclose the true arrival rate to encourage customer participation and set the service fee \(p\) around 2.764, yielding a revenue of 7.639.

\section{Conclusion}

In this paper, we discuss the impact of information asymmetry on queueing strategies. Our theories are developed for the general distribution of customers' belief and can be applied to both discrete and continuous settings. This work can serve as an unobservable counterpart for the paper \cite{cui_blind_2016}, which focuses on the observable model. To the best of our knowledge, we are the first to analyze the information asymmetry in the unobservable case with general belief distribution and to answer how to make joint decisions over the service fee and information disclosure. We provide easy-to-follow guidelines for managers to make decisions on information disclosure, which depends largely on the comparison between the average arrival rate belief and true arrival rate. 

In our paper, we assume that the arrival rate is fixed within a single day and that managers have the opportunity to reset the service fee \(p\) each day. For future work, an interesting scenario would be one where managers have only a single opportunity to set pricing and must also make decisions under uncertainty. Another possible direction would be to consider Bayesian customers, who can update their beliefs upon observing the pricing or receiving signals from the managers.

\bibliography{references}  
\bibliographystyle{IEEEtran.bst}

\appendix 

\section{Analytical Properties of Relevant Functions} \label{Sec:Property}

\paragraph{Justification of differentiation under expectation.} According to the Leibniz integral rule, when the integration limits do not depend on $x$,
\[
\frac{d}{dx}\!\left(\int_{a}^{b} f(x,t)\,dt\right)
= \int_{a}^{b} \frac{\partial}{\partial x} f(x,t)\,dt.
\]
Since the expectation over $\Lambda$ can be written as an integral over its (bounded) support with fixed limits $[\lambda_{\min},\lambda_{\max}]$,
\[
\frac{d}{dx}\,\mathbb{E}[f(x,\Lambda)]
= \frac{d}{dx}\!\left(\int_{\lambda_{\min}}^{\lambda_{\max}} f(x,\lambda)\,dF_\Lambda(\lambda)\right)
= \int_{\lambda_{\min}}^{\lambda_{\max}} \partial_x f(x,\lambda)\,dF_\Lambda(\lambda)
= \mathbb{E}\!\left[\partial_x f(x,\Lambda)\right].
\] Thus, throughout this paper, it is always legitimate to differentiate under the expectation.
\subsection{Concavity and unique critical point of \(\text{Rev}^S(q)\)}\label{concave proof}
From \eqref{Rev^Sq}, we have
\[
\text{Rev}^S(q)=q\lambda\,\mathbb{E}\!\left[R-CW(q\Lambda)\right].
\]
Write $x=q\Lambda$. Then
\[
W(x)=\frac{1}{\mu}+\frac{s^{2}}{2}\,\frac{x}{1-x/\mu},\qquad
W'(x)=\frac{s^{2}}{2}\,\frac{1}{(1-x/\mu)^{2}},\qquad
W''(x)=\frac{s^{2}/\mu}{(1-x/\mu)^{3}},
\]
so $W',W''>0$ on the feasible set $\{q:\ q\Lambda/\mu<1\ \text{a.s.}\}$.

From \eqref{Rev^Sq},
\[
\text{Rev}^S(q)=\lambda q\,\mathbb{E}\!\left[R-C\,W(q\Lambda)\right],\qquad
(\text{Rev}^S)'(q)=\lambda\,\mathbb{E}\!\left[R-C\Big(W(q\Lambda)+q\Lambda W'(q\Lambda)\Big)\right],
\]
\[
(\text{Rev}^S)''(q)=-\,\lambda C\,\mathbb{E}\!\left[\Lambda W'(q\Lambda)+q\Lambda^{2}W''(q\Lambda)\right]
= -\,\lambda C\,\mathbb{E}\!\left[\frac{s^{2}}{2}\frac{\Lambda}{(1-q\Lambda/\mu)^{2}}
+ s^{2}\,\frac{q\Lambda^{2}}{\mu(1-q\Lambda/\mu)^{3}}\right] \;<\;0.
\]
Hence, $\text{Rev}^S$ is strictly concave on the feasible interval and has at most one critical point.

Moreover,
\[
(\text{Rev}^S)'(0)=\lambda\!\left(R-C\,W(0)\right)=\lambda\!\left(R-C/\mu\right)
\]
and
\[
(\text{Rev}^S)'(q)\;\le\;\lambda\!\left[R
- C\!\left( \frac{1}{\mu}+\frac{s^{2}}{2}\frac{q\lambda_{\min}}{1-q\lambda/\mu_{\min}}\right)\right]
\;\xrightarrow[q\uparrow (\,\lambda_{\min}/\mu\,)^{-}]{}\; -\infty.
\]
Thus, when $R\ge C/\mu$, there exists an interior root of $(\text{Rev}^S)'(\cdot)$; strict concavity implies this root (critical point) is unique.

\subsection{Unique root of \(\text{U}^S(q)\)}

From \eqref{u^s}, we have
\[
\text{U}^S(q)=R-p-\mathbb{E}\!\left[W(q\Lambda)\right].
\]
Differentiating under the expectation,
\[
(\text{U}^S)'(q)=-\,C\,\mathbb{E}\!\left[\Lambda\,W'(q\Lambda)\right]
=-\,C\,\mathbb{E}\!\left[\frac{s^{2}}{2}\,\frac{\Lambda}{(1-q\Lambda/\mu)^{2}}\right]\;<\;0,
\]

so $\text{U}^S(q)$ is strictly decreasing and thus has at most one root. When \(p = 0\), by an argument similar to that in (\ref{concave proof}), $\text{U}^S(q)$ has one unique root.

\subsection{Concavity and unique critical point of \(\text{Rev}^C(q)\), \(\text{SW}^C(q)\) and \(\text{SW}^S(q)\)}
From \eqref{Rev^Uq} and \eqref{SWSq}, we have
\[
\text{Rev}^C(q) = \text{SW}^C(q)=\text{SW}^S(q)=q\lambda\!\left[R-CW(q\Lambda)\right].
\]

\[
\text{Rev}^C(q)=\text{SW}^C(q)=\text{SW}^S(q)=q\lambda\Big[R-C\,W(q\lambda)\Big],\qquad W'(x)=\frac{s^{2}}{2}\frac{1}{(1-x/\mu)^{2}},\quad
W''(x)=\frac{s^{2}/\mu}{(1-x/\mu)^{3}}.
\]
By differentiation,
\[
(\text{SW}^C)'(q)=\lambda\Big[R-C\big(W(q\lambda)+q\lambda W'(q\lambda)\big)\Big],
\]
\[
(\text{SW}^C)''(q)=-\,\lambda C\Big[\lambda W'(q\lambda)+q\lambda^{2}W''(q\lambda)\Big]
= -\,\lambda C\left[\frac{\lambda s^{2}}{2(1-q\lambda/\mu)^{2}}
+ \frac{q\lambda^{2}s^{2}/\mu}{(1-q\lambda/\mu)^{3}}\right]\;<\;0.
\]

Hence, $\text{Rev}^C(q)$, $\text{SW}^C(q)$ and $\text{SW}^S(q)$ are strictly concave and therefore have at most one critical point. By an argument similar to that in (\ref{concave proof}), they have a unique critical point.

\section{Discrete Version of Theorem \ref{thm:Rev_U_P}} 
\label{Sec:Appendix}
Let the discrete arrival rate belief \( \Lambda \) take values in a finite set \( \{ \lambda_1, \lambda_2, \ldots, \lambda_k \} \), where \( \lambda_1 < \lambda_2 < \cdots < \lambda_k \). The probability mass function is denoted by \( p(\lambda_i) := \mathbb{P}(\Lambda = \lambda_i) \), for \( i = 1, \ldots, k \). The discrete version of Theorem \ref{thm:Rev_U_P} is as follows: 

\begin{theorem} 
Define the threshold function  \begin{align}
   M(\xi) &= 
   \begin{cases}
        1/\mathbb{E}(1/\Lambda)  & \text{if }\xi \leq  \lambda_{min} , \\
    \left( \sum_{\lambda_{\min} \leq \lambda_i \leq \xi} \dfrac{p(\lambda_i)}{\xi} 
      + \sum_{\xi < \lambda_i \leq \lambda_{\max}} \dfrac{p(\lambda_i)}{\lambda_i} \right)^{-1} 
      & \text{if } \lambda_{\min} < \xi < \lambda_{\max}.
   \end{cases}
   \end{align}
   Given $p$ fixed, In an M/G/1 queue,
   \begin{itemize}
       \item[(i)] When  $\xi\geq \lambda_{max}$,  \(\text{Rev}^P(p) = \text{Rev}^C(p)\).
        \item[(ii)] When  $\xi< \lambda_{max}$:
            \begin{itemize}
                \item[(a)] If $\lambda = M(\xi)$, then $\text{Rev}^P(p) = \text{Rev}^C(p)$; 
                \item[(b)]
                If $\lambda > M(\xi)$, then $\text{Rev}^P(p) > \text{Rev}^C(p)$; 
                \item[(c)]
                If $\lambda < M(\xi)$, then $\text{Rev}^P(p) < \text{Rev}^C(p)$.
                
            \end{itemize}
   \end{itemize}
   
   The threshold function $M(\xi)$ is continuous, monotonously increasing in terms of $\xi$ and piecewise convex.
\end{theorem}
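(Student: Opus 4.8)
The statement to prove is the discrete analogue of Theorem~\ref{thm:Rev_U_P}, so the plan is to mirror the continuous proof line by line, replacing each integral over $[\lambda_{\min},\lambda_{\max}]$ with the corresponding finite sum over $\{\lambda_1,\dots,\lambda_k\}$ weighted by the pmf $p(\lambda_i)$. The key identity driving everything is $\mathbb{E}[Q(p)] = \xi\, M(\xi)^{-1}$ whenever $\xi < \lambda_{\max}$, which is just a rewriting of the definition of $M$. First I would record the discrete form of $\mathbb{E}[Q(p)]$, namely
\[
\mathbb{E}[Q(p)] = \begin{cases}
1 & \xi \geq \lambda_{\max},\\[2pt]
\sum_{\lambda_i \leq \xi} p(\lambda_i) + \sum_{\lambda_i > \xi} \dfrac{\xi}{\lambda_i}\, p(\lambda_i) & \lambda_{\min} < \xi < \lambda_{\max},\\[4pt]
\xi\,\mathbb{E}[1/\Lambda] & \xi \leq \lambda_{\min},
\end{cases}
\]
and then substitute into $\text{Rev}^P(p) = p\lambda\,\mathbb{E}[Q(p)]$ and compare with $\text{Rev}^C(p) = p\lambda$ (when $\lambda \leq \xi$) or $p\xi$ (when $\lambda > \xi$), exactly as in parts (i), (*), and (**) of the original proof.

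For part~(i), when $\xi \geq \lambda_{\max}$ every customer joins with probability one, so $\mathbb{E}[Q(p)] = 1$ and $\text{Rev}^P(p) = p\lambda = \text{Rev}^C(p)$. For part~(ii), I would split on the sign of $\lambda - \xi$: when $\lambda \leq \xi < \lambda_{\max}$ we have $0 < \mathbb{E}[Q(p)] < 1$, giving $\text{Rev}^P(p) < p\lambda = \text{Rev}^C(p)$ (and since $M(\xi) > \xi \geq \lambda$ here, the claimed ordering $\lambda < M(\xi) \Rightarrow \text{Rev}^P < \text{Rev}^C$ is consistent); when $\xi < \lambda$, the classical revenue is $p\xi$, so the comparison $\text{Rev}^P(p) \lessgtr \text{Rev}^C(p)$ reduces to $\lambda\,\mathbb{E}[Q(p)] \lessgtr \xi$, i.e.\ to $\lambda \lessgtr \xi/\mathbb{E}[Q(p)] = M(\xi)$, yielding (a), (b), (c) directly. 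The subcase $\xi \leq \lambda_{\min}$ is handled by the explicit formula $M(\xi) = 1/\mathbb{E}[1/\Lambda]$ as in the continuous case.

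For the structural properties of $M$, on $(\lambda_{\min},\lambda_{\max})$ write $M(\xi) = 1/A(\xi)$ with $A(\xi) = \sum_{\lambda_i \leq \xi} p(\lambda_i)/\xi + \sum_{\lambda_i > \xi} p(\lambda_i)/\lambda_i$. Between consecutive support points $\xi$ does not cross any $\lambda_i$, so on each such subinterval $A(\xi) = c_1/\xi + c_2$ for constants $c_1 = \sum_{\lambda_i \leq \xi} p(\lambda_i) > 0$ and $c_2 = \sum_{\lambda_i > \xi} p(\lambda_i)/\lambda_i \geq 0$; hence $A$ is strictly decreasing and convex there, so $M = 1/A$ is strictly increasing, and a short computation (as in the continuous proof, using $M'' = -(A''A + 2(A')^2)/A^3$ with $A'' = 2c_1/\xi^3 > 0$) shows $M$ is convex on each subinterval. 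Continuity is checked at each breakpoint $\xi = \lambda_j$ by noting that moving $\lambda_j$ from the second sum to the first contributes $p(\lambda_j)/\lambda_j$ to either grouping when $\xi = \lambda_j$, so $A$ — and hence $M$ — matches from both sides; the endpoint values $M(\lambda_{\min}) = 1/\mathbb{E}[1/\Lambda]$ and $M(\lambda_{\max}) = \lambda_{\max}$ follow by direct evaluation, the latter since at $\xi = \lambda_{\max}$ the second sum is empty and $A(\lambda_{\max}) = 1/\lambda_{\max}$. The only mild subtlety — and the step I would be most careful about — is bookkeeping at the breakpoints: one must fix a convention for whether a support point equal to $\xi$ belongs to the ``$\leq \xi$'' group or the ``$> \xi$'' group, and verify that the one-sided limits of $A$ agree regardless, which they do because the term $p(\lambda_j)/\xi$ evaluated at $\xi = \lambda_j$ equals $p(\lambda_j)/\lambda_j$. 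No genuinely new idea beyond the continuous case is needed.
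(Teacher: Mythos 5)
Your proposal is correct and follows essentially the same route as the paper's own appendix proof: write the discrete form of $\mathbb{E}[Q(p)]$, reduce the revenue comparison to $\lambda \lessgtr \xi/\mathbb{E}[Q(p)] = M(\xi)$ in each regime, and establish monotonicity, piecewise convexity, and continuity of $M=1/A$ by examining $A'$, $A''$ on each subinterval between support points and the one-sided limits at the breakpoints. Your explicit observation that $A(\xi)=c_1/\xi+c_2$ between consecutive support points, and your care with the breakpoint convention, are slightly more detailed than the paper's treatment but introduce no new idea.
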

\begin{proof}

(i) From \eqref{RevPp}, we have
$$\text{Rev}^P(p) = p\lambda \mathbb{E}[Q(p)],$$
where 
\begin{align} 
   \mathbb{E}[Q(p)] &= 
   \begin{cases}
        1   & \text{if }\xi\geq \lambda_{max} , \\
     \sum_{\lambda_{\min} \leq \lambda_i \leq \xi} p(\lambda_i)+\sum_{\xi < \lambda_i \leq \lambda_{\max}} \dfrac{\xi }{\lambda_i}p(\lambda_i)  & \text{if } \lambda_{min}<\xi< \lambda_{max}  ,\\
      \xi \mathbb{E}[1/\Lambda] & \text{if } \xi \leq  \lambda_{min}.
   \end{cases}
\end{align}

When $\xi\geq \lambda$, the threshold comes from comparing $\text{Rev}^P(p) = p\lambda$ and $\text{Rev}^C(p) = p\lambda$, which means that for any \( \xi\), \(\text{Rev}^P(p) = \text{Rev}^C(p)\). 

(ii) \begin{itemize}
\item[(ii.1)] When $\lambda \le \xi < \lambda_{\max}$, since $0<\mathbb{E}[Q(p)]<1$, we have
\[
\text{Rev}^C(p)=p\lambda \;>\; p\lambda\,\mathbb{E}[Q(p)] \;=\; \text{Rev}^P(p).
\]

\item[(ii.2)] When $\xi<\lambda$, the equality $\text{Rev}^P(p)=\text{Rev}^C(p)$ holds iff $\lambda=M(\xi)$.

\quad(a) If $\xi \le \lambda_{\min}$, comparing $\text{Rev}^P(p)=p\lambda\,\xi\,\mathbb{E}[1/\Lambda]$ with $\text{Rev}^C(p)=p\xi$ gives equality iff \( \lambda=\frac{1}{\mathbb{E}[1/\Lambda]}=M(\xi).\)

\quad(b) If $\lambda_{\min}<\xi<\lambda$, define
\[
  A(\xi) := \sum_{\lambda_{\min} \leq \lambda_i \leq \xi} \dfrac{p(\lambda_i)}{\xi} 
      + \sum_{\xi < \lambda_i \leq \lambda_{\max}} \dfrac{p(\lambda_i)}{\lambda_i}.
\]
Then $\text{Rev}^P(p)=\text{Rev}^C(p)$ iff
\[
\lambda=\frac{1}{A(\xi)}=M(\xi).
\]
Moreover, if $\lambda>M(\xi)$ then $\text{Rev}^P(p)>\text{Rev}^C(p)$, and if $\lambda<M(\xi)$ then $\text{Rev}^P(p)<\text{Rev}^C(p)$.
\end{itemize}

Finally, since $M(\xi)=\xi/\mathbb{E}[Q(p)]$ with $0<\mathbb{E}[Q(p)]<1$, we have $M(\xi)>\xi$ whenever $\xi<\lambda_{\max}$. Combining (ii.1) and (ii.2) yields the claim in (ii).

Next, we show the properties of the threshold function \( M(\xi) \). When $\lambda_{min}<\xi< \lambda_{max}$, we have
\[
A^{'}(\xi) = -\sum_{\lambda_{\min} \leq \lambda_i \leq \xi} \dfrac{p(\lambda_i)}{\xi^2} < 0,
\]
which implies
\[
M^{'}(\xi) = -\frac{A^{'}(\xi)}{A^2(\xi)} > 0.
\]
Furthermore,
\[
M^{''}(\xi) = -\frac{A^{''}(\xi)\,A(\xi) + 2\left[A^{'}(\xi)\right]^2}{A^3(\xi)},
\]
where
\[
A^{''}(\xi)= -\sum_{\lambda_{\min} \leq \lambda_i \leq \xi} \dfrac{2p(\lambda_i)}{\xi^3}
\]
Then, we have
    $$M^{''}(\xi) = \dfrac{B(\xi)}{A^3(\xi)},$$
  where 

\begin{align*}
B(\xi) = \sum_{\lambda_{\min} \leq \lambda_i \leq \xi} \dfrac{2p(\lambda_i)}{\xi^3}\sum_{\xi < \lambda_i \leq \lambda_{\max}} \dfrac{p(\lambda_i)}{\lambda_i}>0.
\end{align*}
Since \(M''(\xi)>0\) and \(M'(\xi)>0\), the threshold at each interval $\lambda_{i}<\xi< \lambda_{i+1}$ is monotonously increasing and convex.

Next, since \( \lim_{\xi \to \lambda_i^+} A(\xi) - \lim_{\xi \to \lambda_i^-} A(\xi) = 0 \), it follows that \( \lim_{\xi \to \lambda_i^+} M(\xi) - \lim_{\xi \to \lambda_i^-} M(\xi) = 0 \), showing that \( M(\xi) \) is also continuous at each point \( \lambda_i \), for \( i = 1, 2, \ldots, k \).

We can also verify that $M(\lambda_{min}) = 1/\mathbb{E}(1/\Lambda)$ and $M(\lambda_{max}) = \lambda_{max}$, showing that \(M(\xi)\) is monotonously increasing. Thus, \(M(\xi)\) is piece-wise continuous and monotonously increasing.

\end{proof}

\end{document}